\title{Resilience of the Rank of Random Matrices}
\author{Asaf Ferber \thanks{Department of Mathematics, University of California, Irvine. Email: asaff@uci.edu} \and
	Kyle Luh \thanks{Center of Mathematical Sciences and Applications, Harvard University. Email: kluh@cmsa.fas.harvard.edu} \and Gweneth McKinley \thanks{Department of Mathematics, MIT. Email: gweneth@mit.edu}  }
\date{\today}
\theoremstyle{plain}
\newtheorem{theorem}{Theorem}[section]
\newtheorem{lemma}[theorem]{Lemma}
\newtheorem{claim}[theorem]{Claim}
\newtheorem{definition}[theorem]{Definition}
\newcommand{\eps}{\varepsilon}
\newcommand{\LL}{\mathcal{L}}
\newcommand{\Z}{\mathbb{Z}}
\newcommand{\R}{\mathbb{R}}
\renewcommand{\a}{\mathbf{a}}
\newcommand{\Rka}{R_k^{\alpha}}
\DeclareMathOperator{\supp}{supp}
\DeclareMathOperator{\Bad}{\boldsymbol{B}}
\begin{document}

\global\long\def\R{\mathbb{R}}

\global\long\def\S{\mathcal{R}}

\global\long\def\Z{\mathbb{Z}}

\global\long\def\C{\mathbb{C}}

\global\long\def\Q{\mathbb{Q}}

\global\long\def\N{\mathbb{N}}

\global\long\def\P{\Pr}

\global\long\def\F{\mathbb{F}}

\global\long\def\U{\mathcal{U}}

\global\long\def\V{\mathcal{V}}

\global\long\def\E{\mathbb{E}}

\global\long\def\rk{\text{rank}}

\global\long\def\A{\mathcal{A}}

\global\long\def\L{\mathcal{L}}

\global\long\def\QQ{\mathcal{Q}}

\def \a {\alpha}
\def \b {\beta}
\def \g {\gamma}
\def \e {\varepsilon}
\def \d {\delta}
\def \l {\lambda}

\def \EE {\mathcal{E}}
\def \LL {\mathcal{L}}
\def \MM {\mathcal{M}}
\def \NN {\mathcal{N}}
\def \cZ {\mathcal{Z}}

	\maketitle
	\abstract{Let $M$ be an $n \times m$ matrix of independent Rademacher ($\pm 1$) random variables. It is well known that if $n \leq m$, then $M$ is of full rank with high probability.  We show that this property is resilient to adversarial changes to $M$. More precisely, if $m \geq n + n^{1-\eps/6}$, then even after changing the sign of $(1-\eps)m/2$ entries, $M$ is still of full rank with high probability. Note that this is asymptotically best possible as one can easily make any two rows proportional with at most $m/2$ changes. Moreover, this theorem gives an asymptotic solution to a slightly weakened version of a conjecture made by Van Vu in \cite{vu2008discrete}. 
	}

\section{Introduction}
Random discrete matrices, in particular $0/1$ and $\pm 1$ random matrices, have a distinguished history in random matrix theory.  They have applications in computer science, physics, and random graph theory, among others, and numerous investigations have been tailored to this class of random matrices \cite{bourgain2010singularity,kahn1995probability, komlos1967determinant,nguyen2013singularity, tao2007singularity,  tao2009inverse,   tikhomirov2018}.  Discrete random matrices are often of interest in their own right as they pose combinatorial questions that are vacuous or trivial for other models such as the gaussian ensembles (e.g. singularity and simpleness of spectrum).  For example, it is already non-trivial to show that a Bernoulli (0/1) random matrix is non-singular with probability $1-o(1)$ (this was firstly proved by Koml\'os in \cite{komlos1967determinant}).    
For an $n \times n$ Bernoulli random matrix $M_n$, it was a long standing conjecture that 
\[
p_n:=\P(M_n \text{ is singular}) = \left( \frac{1}{2} + o(1) \right)^n,
\]
which corresponds to the probability that any two rows or columns are identical. This problem has stimulated much activity \cite{kahn1995probability, tao2007singularity, bourgain2010singularity}, culminating in the recent resolution by Tikhomirov \cite{tikhomirov2018} of the above conjecture.

In this work, we examine another aspect of the singularity problem for discrete random matrices.  We will be concerned with robustness of the non-singularity, meaning how many changes to the entries of the matrix need to be performed to make a typical random matrix singular. This has been called the ``resilience'' of a random matrix with respect to singularity \cite{vu2008discrete}. Note that an $n\times n$ matrix is singular if and only if its rank is less than $n$. Therefore, we can extend the above notion for general matrices (not necessarily square) as follows: 

	\begin{definition}
		Given an $n\times m$ matrix $M$ with entries in $\{\pm1\}$, we denote by $Res(M)$ the minimum number of sign flips necessary in order to make $M$ of rank less than $n$.
	\end{definition}

Note that for every two $\pm 1$ vectors $\boldsymbol{a},\boldsymbol{b} \in \{\pm 1\}^m$ one can easily achieve either $\boldsymbol{a}=\boldsymbol{b}$ or $\boldsymbol{a}=-\boldsymbol{b}$ by changing at most $m/2$ entries; so in particular, for an $n\times m$ matrix $M$ we have the deterministic upper bound $$Res(M)\leq m/2.$$ 
Indeed, for the case $n=m$ it is conjectured by Vu that
\[
Res(M_n) = \left( \frac{1}{2} + o(1) \right) n
\]
with probability $1 - o(1)$ \cite[Conjecture 7.4]{vu2008discrete}.  
Note that by a a simple union bound, using any exponential upper bound on $p_n$, one can easily show that a.a.s. we have
	$$Res(M_n)\geq cn/\log n$$
for some appropriate choice of $c>0$. Surprisingly, no better lower bound is known.

In this paper we prove that for $m\geq (1+o(1))n$, the trivial upper bound $m/2$ is asymptotically tight. Before stating our main result we define the following notation: given $n,m\in \mathbb{N}$, we let $M_{n,m}$ be an $n\times m$ matrix with independent entries chosen uniformly from $\{\pm 1\}$. 

\begin{theorem} 
		\label{thm:resilience}
		For every $\varepsilon>0$ and $m\geq n+n^{1-\eps/6}$, a.a.s. we have
		$$Res(M_{n,m})\geq (1-\varepsilon)m/2.$$
	\end{theorem}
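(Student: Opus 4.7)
The plan is to reduce the problem to a column-wise combinatorial cost and then combine concentration inequalities with a union bound over the possible left-null vectors.

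For $v \in \R^n \setminus \{0\}$, set $\mathcal{T}(v) := \{\tau \in \{\pm 1\}^n : \langle v, \tau \rangle = 0\}$ and, writing $d_H$ for Hamming distance,
\[
c_j(v) := \min_{\tau \in \mathcal{T}(v)} d_H(\tau, M_{\cdot j}), \qquad C(v) := \sum_{j=1}^m c_j(v),
\]
with the convention $c_j(v) = +\infty$ when $\mathcal{T}(v) = \emptyset$. If $M'$ is obtained from $M_{n,m}$ by sign flips and satisfies $v^T M' = 0$, then it uses at least $C(v)$ flips, because each column of $M'$ must lie in $v^\perp \cap \{\pm 1\}^n = \mathcal{T}(v)$ and hence at Hamming distance at least $c_j(v)$ from $M_{\cdot j}$. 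Since a rank-deficient integer matrix has an integer left-null vector, the theorem reduces to showing
\[
\Pr\bigl[\exists\, v \in \Z^n \setminus \{0\}:\ C(v) \leq (1-\eps)m/2\bigr] = o(1).
\]
Writing $s := |\supp(v)|$, one has $c_j(v) \leq s$, the $c_j(v)$ are i.i.d.\ in $j$, and both $c_j(v)$ and $C(v)$ depend on $v$ only through $\mathcal{T}(v)$.

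For the extremal family $v \in \alpha\{e_i \pm e_j\}$, the set $\mathcal{T}(v)$ restricted to $\supp(v)$ is a pair of complementary sign vectors, so $c_j(v) \in \{0,1\}$ with $c_j(v) \sim \mathrm{Bern}(1/2)$. A Chernoff bound gives $\Pr[C(v) \leq (1-\eps) m/2] \leq \exp(-\eps^2 m/2)$, and summing over the $O(n^2)$ extremal vectors yields $o(1)$ since $m \geq n$. For any other $v$ with $\mathcal{T}(v) \neq \emptyset$ (so $s \geq 3$), the Littlewood--Offord inequality gives $|\mathcal{T}(v) \cap \{\pm 1\}^{\supp(v)}| \leq \binom{s}{\lfloor s/2 \rfloor} < 2^{s-1}$, hence $\E[c_j(v)] \geq 1 - |\mathcal{T}(v) \cap \{\pm 1\}^{\supp(v)}|/2^s \geq 1/2 + \delta_0$ for some absolute $\delta_0 > 0$.

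For these non-extremal $v$, Hoeffding's inequality gives $\Pr[C(v) \leq (1-\eps)m/2] \leq \exp(-\Omega(m/s^2))$, and we partition by support size $s$. For $s$ up to a threshold $s_0$ (chosen depending on $\eps$), the number of admissible equivalence classes of $v$ is at most $\binom{n}{s}\cdot 2^{O(s^2)}$, which is absorbed by the concentration bound as long as $s^3 \log n \ll m$. For $s > s_0$ much stronger anti-concentration holds: typical $v$'s have $|\mathcal{T}(v) \cap \{\pm 1\}^{\supp(v)}|$ much smaller than $\binom{s}{\lfloor s/2 \rfloor}$ (by Hal\'asz-type small-ball estimates), forcing $c_j(v)$ close to $s/2$ and hence $C(v) \gg m$; the remaining integer vectors $v$ are discretized via a Rudelson--Vershynin-style least-common-denominator net whose cardinality is controlled by the small-ball parameter. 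The hypothesis $m \geq n + n^{1-\eps/6}$ provides the surplus of $n^{1-\eps/6}$ columns needed to defeat the net size in the large-support regime; calibrating the LCD threshold against the small-ball parameter so that the union bound closes uniformly in $s$ is the main technical obstacle and the source of the $\eps/6$ exponent.
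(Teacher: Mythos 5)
Your cost-function reduction is a genuinely nice idea and correctly reframes the problem: $Res(M) < (1-\eps)m/2$ over $\Z$ (after passing to a primitive integer left-null vector $v$) is \emph{equivalent} to the existence of a nonzero $v$ with $C(v) \le (1-\eps)m/2$, since each column of $M'$ must land in $\mathcal{T}(v)$. This replaces the paper's brute-force union bound over the $\binom{nm}{\le (1-\eps)m/2} \approx n^{(1-\eps)m/2}$ flip patterns (the route the paper actually takes) with a single tail event for $C(v)$, which is cleaner. The extremal case (2-sparse $v$, Chernoff, $O(n^2)$-fold union bound) is correct.

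However, there are two issues, one minor and one fatal.

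The minor issue: your Hoeffding bound $\Pr[C(v) \le (1-\eps)m/2] \le \exp(-\Omega(m/s^2))$ is much weaker than what the structure gives you. Since the event is a lower-tail event and $c_j \ge 1$ off the set $\{c_j = 0\}$ (which has probability $\rho(v)$), the one-sided MGF bound $\E[e^{-\lambda c_j}] \le \rho(v) + e^{-\lambda}(1-\rho(v))$ yields
\[
  \Pr\bigl[C(v) \le (1-\eps)m/2\bigr] \le \Bigl(B_\eps\,(1-\rho(v))^{(1-\eps)/2}\rho(v)^{(1+\eps)/2}\Bigr)^m,
\]
with no dependence on the range $s$. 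This is the bound you should use; it already gives exponential decay in $m$ whenever $\rho(v)$ is bounded slightly below $1/2$.

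The fatal gap is the union bound over $v$ at large support. The quantity $C(v)$ depends on $v$ only through $\mathcal{T}_S := \mathcal{T}(v)\cap\{\pm 1\}^S$ where $S = \supp(v)$, $|S| = s$, and the number of distinct sets $\mathcal{T}_S$ realized by hyperplanes through the origin (Cover-type counting of linear threshold functions) is of order $2^{\Theta(s^2)}$. So even with the sharpened Chernoff bound above, you would need $\Pr[C(v) \le (1-\eps)m/2] \le 2^{-\Omega(s^2)}$ \emph{uniformly} over equivalence classes; for $s$ on the order of $n$ and $m = O(n)$ this requires $\rho(v) \le 2^{-\Omega(n)}$ for every such $v$, which is false — e.g.\ $v = (1,\dots,1)$ has $\rho(v) \sim 1/\sqrt{n}$. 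The statement you make to patch this (``typical $v$'s have small $|\mathcal{T}_S|$'') is true but irrelevant: you must handle \emph{all} $v$, and the atypical ones are exactly the bottleneck. What is needed is a bound on the \emph{number} of equivalence classes (or, in the paper's $\F_p$ formulation, the number of vectors $\boldsymbol{a} \in \F_p^n$) having a given atom probability $\rho$, so the union bound can be stratified by $\rho$. This is precisely what the paper's counting lemma (\cref{thm:counting-lemma}, from~\cite{ferber2019counting}) provides, and your sketch contains no substitute for it. The appeal to a Rudelson--Vershynin LCD net is not a drop-in replacement: LCD nets are calibrated to produce $e^{-\Theta(n)}$-type bounds for least singular values, and the paper's introduction explicitly flags that such arguments do not readily deliver the sharp $1/2$ in the exponent of \eqref{eq:main bound}. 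So, while your reduction and the single-vector estimate are sound and arguably more elegant than the paper's, the counting step — the heart of the problem — is missing.
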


Our proof strategy roughly goes as follows: Consider an outcome $M$ of $M_{n,m}$. Note that if the rank of $M$ is less than $n$, then in particular, writing $m' = m - n^{1-\eps/6}$, there exists an $n\times m'$ submatrix $M'$ of $M$ with rank less than $n$. Moreover, as $M'$ is not of full rank, there exists $\boldsymbol{a}\in \mathbb{R}^n\setminus \{\boldsymbol{0}\}$ which lies in the \emph{left kernel of} $M'$ (that is, with $\boldsymbol{a}^TM'=\boldsymbol{0}$). Our main goal is to show that for each such $\boldsymbol{a}$ (if it exists), and for a randomly chosen $\boldsymbol{x}\in \{\pm 1\}^n$, the probability 
$$\rho(\boldsymbol{a}):=\Pr[\boldsymbol{a}^T\boldsymbol{x}=0]$$
is typically very small.

Next, observe that a vector $\boldsymbol{a}$ will be in the left kernel of $M$ if and only if it is in the left kernel of $M'$ and is also orthogonal to the remaining $n^{1-\eps/6}$ columns of $M$. Therefore, using the bound on $\rho(\boldsymbol{a})$ and the extra $n^{1-\eps/6}$ columns of $M$, we want to ``boost'' the probability and show that 
\begin{equation}\label{eq:main bound}
    \Pr[\exists \boldsymbol{a} \text{ such that } \boldsymbol{a}^TM=\boldsymbol{0}]=n^{-(1/2-o(1))m}.
\end{equation}

Note that since there are at most $\binom{nm}{(1/2-o(1))m}\approx n^{(1/2+o(1))m}$ many matrices that can be obtained from $M$ by changing $s\leq (1/2-o(1))m$ entries, and there are at most $2^{m}$ many choices for $M'$, using the above bound we complete the proof by a simple union bound (and of course, showing that the $o(1)$ terms in \eqref{eq:main bound} work in our favor). 

The main challenge is to prove \eqref{eq:main bound}, as it involves a union bound over all possible $\boldsymbol{a}\in \mathbb{R}^n$. In order to overcome this difficulty, we use some recently developed machinery introduced in \cite{ferber2019counting}. Roughly speaking, we embed the problem into a sufficiently large finite field $\F_p$. Then, as there are finitely many options for $\boldsymbol{a}\in\mathds{F}_p$ in the left kernel of $M$, we can use a counting argument from \cite{ferber2019counting} to bound the probability of encountering each possible kernel vector $\boldsymbol{a}$ according to the corresponding value of $\rho(\boldsymbol{a})$.

We mention that the approach of bounding $\rho(\boldsymbol{a})$ for possible null-vectors in the context of singularity is not new (see for example \cite{ kahn1995probability, nguyen2013singularity, rudelson2008LO,tao2009inverse,  vershynin2014symmetric}).  The novelty of our argument is that we utilize the methods in \cite{ferber2019counting} to obtain the bound \eqref{eq:main bound}.  Most of the previously used arguments yield exponential or polynomial probabilities which would only tolerate a sublinear number of modifications to the matrix.  Although it is possible to modify the previous arguments to generate super-exponential bounds, the exact constant of $1/2$ in \eqref{eq:main bound} seems to be difficult to achieve via other arguments. 

Lastly, we mention that the method in \cite{ferber2019counting} has already been successfully applied to a variety of combinatorial problems in random matrix theory \cite{ Rob, ferber2018singularity,  jain2019b, jain2019combinatorial, luh2019random}.

The remainder of this paper is organized as follows.  In \cref{sec:auxiliary}, we provide the necessary background to state the counting lemma from \cite{ferber2019counting}. 
In  \cref{sec:good-bad-vectors}, we provide a convenient interface to apply the counting lemma.  This is drawn from \cite{ferber2019counting} as well.  
Finally, in \cref{sec:resilience}, we provide the short proof of \cref{thm:resilience}.

\section{Auxiliary results} \label{sec:auxiliary}
Here we review some auxiliary results and introduce convenient notation to be used in the proof of our main result. 

\subsection{Hal\'asz inequality in $\F_p$}

Let $\boldsymbol{a}:= (a_1,\dotsc, a_n) \in (\Z \setminus \{0\})^{n}$ and let $\epsilon_1,\dotsc, \epsilon_n$ be independent and identically distributed (i.i.d.) Rademacher random variables; that is, each $\epsilon_i$ independently takes values $\pm 1$ with probability $1/2$ each. We define the largest atom probability $\rho(\boldsymbol{a})$ by
\[
  \rho(\boldsymbol{a}) := {\textstyle \sup_{x\in \Z}}\Pr\left(\epsilon_1 a_1 + \dotsb + \epsilon_n a_n = x\right).
\]

Similarly, if we are working over some finite field $\F_p$, let
\[
  \rho_{\F_p}(\boldsymbol{a}) := {\textstyle \sup_{x\in \F_p}}\Pr\left(\epsilon_1 a_1 + \dotsb + \epsilon_n a_n = x\right),
 \]
where, of course, the arithmetic is done over $\F_p$. 

Now, let $R_k(\boldsymbol{a})$ denote the number of solutions to $\pm a_{i_1} \pm a_{i_2} \dotsb \pm a_{i_{2k}} \equiv 0$, where repetitions are allowed in the choice of $i_1,\dots,i_{2k} \in [n]$.  
A classical theorem of Hal\'asz \cite{halasz1977estimates} gives an estimate on the atom probability based on $R_k(\boldsymbol{a})$. Here we need the following, slightly different version of this theorem, which can be applied to the finite field setting. 

\begin{theorem}[Hal\'asz's inequality over $\F_p$; Theorem 1.4 in \cite{ferber2019counting}] 
  \label{thm:halasz-fp}
  There exists an absolute constant $C$ such that the following holds for every odd prime $p$, integer $n$, and vector $\boldsymbol{a}:=(a_1,\dotsc, a_n) \in \F_p^{n}\setminus \{\boldsymbol{0}\}$. Suppose that an integer $k \ge 0$ and positive real $M$ satisfy $30M \leq |\supp(\boldsymbol{a})|$ and $80kM \leq n$. Then,
  \[
    \rho_{\F_p}(\boldsymbol{a})\leq \frac{1}{p}+\frac{CR_k(\boldsymbol{a})}{2^{2k} n^{2k} \cdot M^{1/2}} + e^{-M}.
  \]
\end{theorem}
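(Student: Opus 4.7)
The plan is to establish this Hal\'asz-type atom-probability bound via Fourier analysis on $\F_p$, adapting the classical argument from $\Z$ to a finite cyclic group.

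First, Fourier inversion: writing $e_p(x) := \exp(2\pi i x/p)$ and using $\E[e_p(t\epsilon_i a_i)] = \cos(2\pi t a_i/p)$ for Rademacher $\epsilon_i$, character orthogonality gives
\[
\Pr\bigl[\textstyle\sum_i \epsilon_i a_i \equiv x \pmod{p}\bigr] = \frac{1}{p}\sum_{t \in \F_p} e_p(-tx) \prod_{i=1}^n \cos(2\pi t a_i/p).
\]
Peeling off the $t=0$ term (which contributes exactly $1/p$) and applying the triangle inequality reduces the task to bounding $\frac{1}{p}\sum_{t\neq 0}\prod_i |\cos(2\pi t a_i/p)|$. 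Using $|\cos(\pi\theta)| \le \exp(-2\|\theta\|^2)$, with $\|\cdot\|$ denoting the distance to the nearest integer, this product is at most $\exp(-2f(t))$, where $f(t) := \sum_i \|t a_i/p\|^2$. Split the nonzero $t$'s according to $f(t) \ge M$ or $f(t) < M$; the large-$f$ regime contributes $\lesssim e^{-2M}$, producing the $e^{-M}$ error term.

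For the small-$f$ regime, the key tool is the Parseval-type identity
\[
\sum_{t\in\F_p}\Bigl(\textstyle\sum_i \cos(2\pi t a_i/p)\Bigr)^{2k} = \frac{p\, R_k(\boldsymbol{a})}{2^{2k}},
\]
obtained by expanding the $2k$-th power via characters and noting that the result exactly counts signed solutions $\sum_j \sigma_j a_{i_j}\equiv 0$. Combined with $\sum_i \cos(2\pi t a_i/p) \ge n - Cf(t)$ (from the Taylor bound $\cos \ge 1 - C\|\cdot\|^2$) and Markov's inequality applied to this $2k$-th moment, this bounds $|\{t : f(t) < M\}|$ by essentially $R_k(\boldsymbol{a})\,p/(2^{2k} n^{2k})$. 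To upgrade the estimate to the sharper form with $M^{1/2}$ in the denominator, I would refine the layer-cake decomposition by binning $\{f(t)\in[j,j+1]\}$ and feeding in a concentration estimate for $f(t)$ around its mean; Chernoff-type tail bounds are what typically produce Gaussian $M^{1/2}$-factors after integration of the resulting Markov bounds against $e^{-2u}\,du$.

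The main obstacle is this last step: squeezing out the $M^{1/2}$ improvement while keeping absolute constants tight. The stated hypotheses enter precisely here --- $30M \le |\supp(\boldsymbol{a})|$ forces $f(t)$ to grow away from $t=0$ (without enough nonzero coordinates, the level-set argument collapses), and $80kM \le n$ guarantees $(n-CM)^{2k}\asymp n^{2k}$ so that the Markov bound on the Parseval identity does not degrade. Assembling the pieces so that no stray factors of $M$ or $k$ leak into the final constant $C$ is the delicate bookkeeping at the heart of the argument.
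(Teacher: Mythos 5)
Your setup matches the paper's through the Fourier inversion, the passage to $|\cos|$, the level-set decomposition, and the $2k$-th moment Parseval identity bounding $|\{t: f(t) \le M\}|$ by roughly $pR_k(\boldsymbol{a})/(2^{2k}n^{2k})$. But the step you flag as ``the delicate bookkeeping at the heart of the argument''---producing the extra $M^{-1/2}$---is exactly where the proposal has a genuine gap, and the tool you reach for is the wrong one. You propose to get the $M^{1/2}$ by binning $f(t)$ and invoking ``Chernoff-type tail bounds'' or ``concentration of $f(t)$ around its mean.'' There is no randomness left at this stage: $t$ ranges deterministically over $\F_p$, and $f(t)=\sum_i \|t a_i/p\|^2$ has no reason to concentrate; in fact for structured $\boldsymbol{a}$ the level sets $T_t := \{r : f(r)\le t\}$ can be large, and that is precisely the content one is trying to bound. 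No amount of rebinning the layer-cake integral recovers the $M^{-1/2}$ from the single moment estimate alone.

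The actual mechanism---Hal\'asz's idea, carried over to $\F_p$---is additive-combinatorial. One observes that the level sets nest under sumsets: by the triangle and Cauchy--Schwarz inequalities, $r_1,\dots,r_m\in T_t$ implies $r_1+\dots+r_m\in T_{m^2 t}$, so $mT_t\subseteq T_{m^2t}$. Combined with the Cauchy--Davenport theorem ($|mT_t|\ge\min\{p, m|T_t|-m\}$), and the fact that $|T_t|<p$ whenever $t\le|\supp(\boldsymbol{a})|/15$ (this is where the hypothesis $30M\le|\supp(\boldsymbol{a})|$ is used, not merely to ``force $f$ to grow''), one gets $|T_t|\le \sqrt{2t/M}\,|T_{2M}|+1$ for $t\le 2M$. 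Feeding this into $\int_0^M |T_{2t}|e^{-t}\,dt$ and using the moment bound $|T_{2M}|\lesssim pR_k(\boldsymbol{a})/(2^{2k}n^{2k})$ is what produces the $M^{-1/2}$. The hypothesis $80kM\le n$ enters exactly where you said, to keep $(n-40M)^{2k}\asymp n^{2k}$. Without the sumset/Cauchy--Davenport amplification the argument only yields the weaker bound with no $M^{1/2}$ saving, so this omission is not a matter of bookkeeping but a missing key idea.
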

For completeness, even though the proof is (more or less) identical to the original one by Hal\'asz, we include it in full in \cref{app:halasz}.

\subsection{Counting Lemma}

In this section we state a counting lemma from \cite{ferber2019counting} which plays a key role in our proof. First, we need the following definition: 

\begin{definition}
  Suppose that $\boldsymbol{a}\in \F_{p}^{n}$ for an integer $n$ and a prime $p$ and let $k \in \N$. For every $\alpha \in [0,1]$, we define $\Rka(\boldsymbol{a})$ to be the number of solutions to
  \[
    \pm a_{i_1}\pm a_{i_2}\dotsb \pm a_{i_{2k}}= 0 \mod p
  \]
  that satisfy $|\{i_1, \dotsc, i_{2k}\}| \ge (1+\alpha)k$.
\end{definition}

It is easily seen that $R_k(\boldsymbol{a})$ cannot be much larger than $\Rka(\boldsymbol{a})$. This is formalized in the following simple lemma, which is proved in \cite{ferber2019counting}.

\begin{lemma}
\label{lemma:R_k vs Rka}
For all $k,n\in\mathbb{N}$ with $k \le n/2$, and any prime $p$, vector $\boldsymbol{a} \in \F^n_p$, and $\alpha \in [0,1]$,
\[
  R_k(\boldsymbol{a})\leq  \Rka(\boldsymbol{a}) + \left(40 k^{1-\alpha}n^{1+\alpha}\right)^k.
\]
\end{lemma}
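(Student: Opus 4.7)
The plan is to bound the difference $R_k(\boldsymbol{a})-\Rka(\boldsymbol{a})$ directly, which by definition counts exactly those solutions $(\pm, i_1,\dotsc, i_{2k})$ to $\pm a_{i_1}\pm\dotsb \pm a_{i_{2k}}=0\pmod p$ with $|\{i_1,\dotsc, i_{2k}\}|<(1+\alpha)k$. Since we only seek an upper bound on the number of \emph{bad} solutions, we simply ignore the equation and count all sign/index combinations with a small index support. The goal is then to verify that this completely combinatorial count is at most $(40 k^{1-\alpha}n^{1+\alpha})^k$.

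Concretely, the first step is to fix $s=\lfloor(1+\alpha)k\rfloor$ and estimate the number of length-$2k$ sequences $(i_1,\dotsc,i_{2k})\in [n]^{2k}$ whose set of distinct entries has size at most $s$. A clean overcount is to first choose a superset of the distinct values (there are at most $\binom{n}{s}$ choices) and then fill the $2k$ positions using elements of that set in $s^{2k}$ ways; this gives at most $\binom{n}{s}s^{2k}$ sequences. The second step is to multiply by the $2^{2k}=4^k$ possible sign patterns, so
\[
R_k(\boldsymbol{a})-\Rka(\boldsymbol{a})\;\leq\; 4^k\binom{n}{s}s^{2k}.
\]

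The final step is a short calculation: using $\binom{n}{s}\leq (en/s)^s$ and $s\leq (1+\alpha)k$, the right-hand side above becomes
\[
4^k\,e^{(1+\alpha)k}\,n^{(1+\alpha)k}\,\bigl((1+\alpha)k\bigr)^{(1-\alpha)k}
\;=\;\Bigl(4\,e^{1+\alpha}(1+\alpha)^{1-\alpha}\Bigr)^{k}\,n^{(1+\alpha)k}\,k^{(1-\alpha)k}.
\]
So it suffices to verify the scalar inequality $4e^{1+\alpha}(1+\alpha)^{1-\alpha}\leq 40$ for $\alpha\in[0,1]$. This is the one numerical point to check: since $(1+\alpha)^{1-\alpha}$ is bounded on $[0,1]$ (a one-variable calculus check gives a maximum below $1.23$) and $e^{1+\alpha}\leq e^2<7.39$, the product stays below $36$, giving the claimed $40^k$ with room to spare.

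The main ``obstacle'' is really just being honest about the constant: the combinatorial counting is routine and loose, so all the work is confirming that the losses from $\binom{n}{s}\leq (en/s)^s$, $s\leq (1+\alpha)k$, and the $4^k$ sign factor can together be absorbed into the constant $40$ uniformly in $\alpha\in[0,1]$. No use of the hypothesis $k\leq n/2$ is needed beyond ensuring that $s\leq n$ so that $\binom{n}{s}$ is defined in the usual regime; everything else is elementary.
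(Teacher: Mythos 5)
Your proof is correct and follows essentially the same route as the paper's: decompose $R_k - \Rka$ as the count of solutions with small index support, drop the equation and overcount by choosing a size-$s$ superset of the index set, fill the $2k$ slots and multiply by $2^{2k}$ signs, then apply $\binom{n}{s}\le(en/s)^s$ and a numerical bound on the resulting constant. Your version is a touch more careful than the paper's (you take $s=\lfloor(1+\alpha)k\rfloor$ rather than plugging $(1+\alpha)k$ directly into a binomial coefficient, and you note that $k\le n/2$ is what guarantees $s\le n$), while the paper glosses over these points. One small arithmetic slip: with $e^2<7.39$ and $(1+\alpha)^{1-\alpha}<1.23$, the product $4\cdot 7.39\cdot 1.23\approx 36.4$, which is not quite ``below $36$'' as you wrote, but it is certainly below $40$, so the conclusion stands.
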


  \begin{proof}
  By definition, $R_k(\boldsymbol{a})$ is equal to $\Rka(\boldsymbol{a})$ plus the number of solutions to $\pm a_{i_1}\pm a_{i_2}\dotsb\pm a_{i_{2k}} = 0$ that satisfy $|\{i_1, \dotsc, i_{2k}\}| < (1+\alpha)k$. The latter quantity is bounded from above by the number of sequences $(i_1, \dotsc, i_{2k}) \in [n]^{2k}$ with at most $(1+\alpha)k$ distinct entries times $2^{2k}$, the number of choices for the $\pm$ signs. Thus
  \[
    R_k(\boldsymbol{a}) \leq \Rka(\boldsymbol{a}) + \binom{n}{(1+\alpha)k} \big((1+\alpha)k\big)^{2k}2^{2k} \leq \Rka(\boldsymbol{a}) +  \left(4e^{1+\alpha}k^{1-\alpha}n^{1+\alpha}\right)^k,
  \]
  where the final inequality follows from the well-known bound $\binom{a}{b} \le (ea/b)^b$. Finally, noting that $4e^{1+\alpha} \leq 4e^{2} \leq 40$ completes the proof.
\end{proof}

Given a vector $\boldsymbol{a}\in \F_p^n$ and a subset of coordinates $I\subseteq [n]$, we define $\boldsymbol{a}_I$ to be its restriction to the coordinates in $I$; that is, $\boldsymbol{a}_I=(a_i)_{i\in I}\in \F_p^I$. We write $\boldsymbol{b}\subseteq \boldsymbol{a}$ if there exists an $I\subseteq [n]$ for which $\boldsymbol{b}=\boldsymbol{a}_I$. For $\boldsymbol{b}\subseteq \boldsymbol{a}$ we let $|\boldsymbol{b}|$ be the size of the subset $I$ determining $\boldsymbol{b}$.

Now we are ready to state the counting lemma, and for the reader's convenience, we include the full (and relatively short) proof from \cite{ferber2019counting} in \cref{sec:pf-counting-thm}.	

\begin{theorem}[Theorem 1.7 in \cite{ferber2019counting}]
  \label{thm:counting-lemma}
  Let $p$ be a prime, let $k, n \in \N$, $s\in [n]$, $t\in [p]$, and let $\alpha \in (0,1)$. Denoting
  \[
    \Bad_{k,s,\geq t}^{\alpha}(n):= \left\{\boldsymbol{a} \in \F_{p}^{n} : R^{\alpha}_k(\boldsymbol{b})\geq t\cdot \frac{2^{2k} \cdot |\boldsymbol{b}|^{2k}}{p} \text{ for every } \boldsymbol{b}\subseteq \boldsymbol{a} \text{ with } |\boldsymbol{b}|\geq s\right\},
  \]
  we have
  \[
    |\Bad_{k,s,\geq t}^{\alpha}(n)| \leq \left(\frac{s}{n}\right)^{2k-1} (\alpha t)^{s-n} p^n.
  \]
\end{theorem}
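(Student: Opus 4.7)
The plan is to proceed by induction on $n \geq s$, with $s, k, t, \alpha, p$ held fixed. For the base case $n = s$, the claimed bound becomes $p^s$, which holds trivially since $|\Bad_{k,s,\geq t}^{\alpha}(s)| \leq |\F_p^s| = p^s$.

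For the inductive step I would exploit the structural observation that if $\boldsymbol{a} \in \Bad_{k,s,\geq t}^{\alpha}(n)$ and $i \in [n]$, then the restriction $\boldsymbol{a}_{[n]\setminus\{i\}}$ lies in $\Bad_{k,s,\geq t}^{\alpha}(n-1)$, since every sub-vector of $\boldsymbol{a}_{[n]\setminus\{i\}}$ of length at least $s$ is also a sub-vector of $\boldsymbol{a}$. Double counting the pairs $(\boldsymbol{a}, i) \in \Bad_{k,s,\geq t}^{\alpha}(n) \times [n]$ therefore yields
\[
n \cdot |\Bad_{k,s,\geq t}^{\alpha}(n)| \;=\; \sum_{i \in [n]} \sum_{\boldsymbol{c} \in \Bad_{k,s,\geq t}^{\alpha}(n-1)} E_i(\boldsymbol{c}),
\]
where $E_i(\boldsymbol{c})$ denotes the number of $v \in \F_p$ such that inserting $v$ at position $i$ of $\boldsymbol{c}$ produces an element of $\Bad_{k,s,\geq t}^{\alpha}(n)$. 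Comparing the target bound at $n$ against the inductive hypothesis at $n-1$, one sees that the induction will close provided the extension estimate
\[
\sum_{i=1}^{n} E_i(\boldsymbol{c}) \;\leq\; n \cdot \left(\tfrac{n-1}{n}\right)^{2k-1} \cdot \tfrac{p}{\alpha t}
\]
holds for each $\boldsymbol{c} \in \Bad_{k,s,\geq t}^{\alpha}(n-1)$.

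To prove the extension estimate I would count $R_k^{\alpha}$ solutions directly. For each valid $v$ (write $\boldsymbol{c}^v$ for the insertion), the defining condition applied with $\boldsymbol{b} = \boldsymbol{c}^v$ gives $R_k^{\alpha}(\boldsymbol{c}^v) \geq t \cdot 2^{2k} n^{2k}/p$. These solutions split into (i) those not involving position $i$, which coincide with the $R_k^{\alpha}$ solutions of $\boldsymbol{c}$ and are independent of $v$, and (ii) those involving position $i$, which become linear equations in $v$. The solutions in (ii) with non-zero net sign at $i$ pin down $v$ uniquely given $\boldsymbol{c}$, while those with zero net sign at $i$ do not constrain $v$. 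Summing the lower bound $R_k^{\alpha}(\boldsymbol{c}^v) \geq t \cdot 2^{2k}n^{2k}/p$ over valid $v$, exchanging the order of summation, and using the key fact that each $R_k^{\alpha}$ solution uses at least $(1+\alpha)k$ distinct indices---so that in expectation each coordinate participates in a $(1+\alpha)k/n$-fraction of the solutions---should produce the required upper bound on $\sum_i E_i(\boldsymbol{c})$. In this accounting the factor $((n-1)/n)^{2k-1}$ arises from averaging over the $2k$ slots of the multi-index, and the factor $\alpha t$ comes from combining the $R_k^{\alpha}$ lower bound with the $\alpha k$-``excess'' of distinct indices built into the definition of $R_k^{\alpha}$.

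The main obstacle I anticipate is the careful handling of the ``zero-net-sign'' solutions at $i$, which depend only on $\boldsymbol{c}$ and so place no constraint on $v$ whatsoever. Showing that their total contribution is small enough that the $(1+\alpha)k$ distinct-index condition translates into the sharp factor $\alpha t$ (rather than merely $t$) in the denominator is the delicate combinatorial point of the argument, and is where the role of the parameter $\alpha$ in the definition of $R_k^{\alpha}$ is essential.
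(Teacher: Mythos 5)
Your inductive framework takes a genuinely different route from the paper (which does a direct double count over an auxiliary set of ``peeling'' triples rather than an induction on $n$), but as written it has a gap that I believe is fatal: the per-vector extension estimate is false. Concretely, take $\boldsymbol{c} = \boldsymbol{0} \in \F_p^{n-1}$ (or any vector with tiny support) and suppose $t \le p\,(1-1/s)^{2k}$, which holds in the regime of interest ($t$ up to a polynomial, $p$ exponentially large). Then $\boldsymbol{c} \in \Bad_{k,s,\ge t}^\alpha(n-1)$, and for every insertion position $i$ and every $v \in \F_p$ one has $\boldsymbol{c}^v \in \Bad_{k,s,\ge t}^\alpha(n)$: any sub-vector $\boldsymbol{b}$ of size $\ge s$ either avoids position $i$ (so is all zeros, with nearly all $2^{2k}|\boldsymbol{b}|^{2k}$ templates zero-sum) or contains $i$ with one nonzero entry, and then the templates that avoid $i$ alone already give $R_k^\alpha(\boldsymbol{b}) \ge (1-o(1))\,2^{2k}(|\boldsymbol{b}|-1)^{2k} \ge t\cdot 2^{2k}|\boldsymbol{b}|^{2k}/p$. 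Hence $E_i(\boldsymbol{c}) = p$ for every $i$, so $\sum_i E_i(\boldsymbol{c}) = np$, whereas the claimed bound $n\bigl(\tfrac{n-1}{n}\bigr)^{2k-1}\tfrac{p}{\alpha t}$ is roughly $\tfrac{p}{\alpha t} \cdot n$, smaller by a factor of about $\alpha t$. No choice of sub-vector $\boldsymbol{b} \subseteq \boldsymbol{c}^v$ rescues this, because in this example every sub-vector condition is slack; the information that controls $|\Bad(n)|$ is not a per-$\boldsymbol{c}$ statement at all but the fact that such degenerate $\boldsymbol{c}$ are rare.

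This is exactly why the paper does not induct: the double count over triples $\bigl(I,(i_{s+1},\dots,i_n),(F_j,\boldsymbol{\epsilon}^j)_j\bigr)$ produces, for each $\boldsymbol{a} \in \Bad$, a large number of witnessing chains of $R_k^\alpha$-solutions (each pinning down one new coordinate via a solution in which $\ell_{j,2k}$ is a non-repeated index, which happens in an $\alpha$-fraction of solutions by a symmetry argument), and compares this to a global count of $|\cZ|$ and the bound that each triple is compatible with at most $p^s$ vectors. Averaging over the triple structure is what absorbs the sporadic very-bad $\boldsymbol{c}$'s that break your pointwise extension estimate. A related issue in your sketch: using only $\boldsymbol{b}=\boldsymbol{c}^v$ gives $R_k^\alpha(\boldsymbol{c}^v) \ge t\cdot 2^{2k}n^{2k}/p$, but since $R_k^\alpha(\boldsymbol{c})$ has no useful upper bound and contributes per $v$, the inequality may constrain $v$ not at all; and the heuristic that $\alpha$ enters through ``each coordinate participates in a $(1+\alpha)k/n$-fraction of the solutions'' is not how the paper extracts the factor $\alpha$ --- it comes from the fraction of $R_k^\alpha$-solutions in which a designated slot is a non-repeated index, which is at least $2\alpha k / (2k) = \alpha$ by the closure of the solution set under permutation of slots.
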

	
\subsection{``Good'' and ``bad'' vectors} 
\label{sec:good-bad-vectors}

The purpose of this section is to formulate easy-to-use versions of Hal\'asz's inequality (\cref{thm:halasz-fp}) and our counting theorem (\cref{thm:counting-lemma}). This follows \cite{ferber2019counting} closely, but requires a more delicate choice of parameters as we need to achieve the bound in \eqref{eq:main bound} (and crucially, the constant $1/2$ in the exponent).  We shall partition $\F_p^n$ into ``good'' and ``bad'' vectors. We shall then show that, on the one hand, every ``good'' vector $\boldsymbol{a}$ has a small $\rho(\boldsymbol{a})$ and that, on the other hand, there are relatively few ``bad'' vectors.\footnote{In fact, we shall only show that there are relatively few ``bad'' vectors that have some number of nonzero coordinates. The number of remaining vectors (ones with very small support) is so small that even a very crude estimate will suffice for our needs.} The formal statements now follow. In order to simplify the notation, we suppress the implicit dependence of the defined notions on $n$, $k$, $p$, and $\alpha$.

\begin{definition}
  \label{def:Ht-rrm}
  Let $p$ be a prime, let $n,k\in \mathbb{N}$, and let $\alpha \in (0,1)$. For any $t>0$, define the set $\boldsymbol{H}_t$ of \emph{$t$-good} vectors by
  \[
    \boldsymbol{H}_t:= \left\{\boldsymbol{a}\in \F_p^n: \exists \boldsymbol{b}\subseteq \boldsymbol{a} \text{ with }|\supp(\boldsymbol{b})|\geq n^{1- \eps/2} \text{ and }\Rka(\boldsymbol{b})\leq t\cdot \frac{2^{2k} \cdot |\boldsymbol{b}|^{2k}}{p}\right\}.
  \]
  The \emph{goodness} of a vector $\boldsymbol{a} \in \F_p^n$, denoted by $h(\boldsymbol{a})$, will be the smallest $t$ such that $\boldsymbol{a} \in \boldsymbol{H}_t$. In other words
  \[
    h(\boldsymbol{a}) = \min\left\{\frac{p  \cdot \Rka(\boldsymbol{b})}{2^{2k} \cdot |\boldsymbol{b}|^{2k}} : \boldsymbol{b} \subseteq \boldsymbol{a} \text{ and } |\supp(\boldsymbol{b})| \ge n^{1-\eps/2}\right\}.
  \]
\end{definition}

Note that if a vector $\boldsymbol{a} \in \F_p^n$ has fewer than $n^{1 - \eps/2}$ nonzero coordinates, then it cannot be $t$-good for any $t$ and thus $h(\boldsymbol{a}) = \infty$. On the other hand, trivially $\Rka(\boldsymbol{b}) \le 2^{2k} \cdot |\boldsymbol{b}|^{2k}$ for every vector $\boldsymbol{b}$, as there are $2^{2k}|\boldsymbol{b}|^{2k}$ total possible choices of a sequence $\pm b_{i_1}\pm b_{i_2}\pm \cdots \pm b_{i_{2k}}$. Thus every $\boldsymbol{a} \in \F_p^n$ with at least $n^{1-\eps/2}$ nonzero coordinates must be $p$-good, that is, $h(\boldsymbol{a}) \le p$ for each such $\boldsymbol{a}$.

Having formalized the notion of a ``good" vector, we are now ready to state and prove two corollaries of \cref{thm:halasz-fp,thm:counting-lemma} that lie at the heart of our approach. (Note: the particular choice of parameters in \cref{lem:smallballdreg} is made for convenience in a later application.)

\begin{lemma}
  \label{lem:smallballdreg}
  Let $\boldsymbol{a} \in \boldsymbol{H}_t$, let $\alpha \in (0,1)$, and let $\eps<1/100$. Suppose that $p=\Theta(2^{n^{\eps/3}})$ is a prime, $t \geq n$, and $k = \Theta(n^{\eps/3})$. Then for sufficiently large $n$ we have
  \[
    \rho_{\F_p}(\boldsymbol{a}) \le \frac{Ct}{p n^{\frac{1}{2}(1 - 5\eps/6})},
  \]
  where $C = C(\alpha, \eps)$ is a constant depending only on $\alpha$ and $\eps$. 
\end{lemma}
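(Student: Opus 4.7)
The plan is to combine Hal\'asz's inequality (\cref{thm:halasz-fp}) with \cref{lemma:R_k vs Rka} applied to the witness sub-vector $\boldsymbol{b}\subseteq\boldsymbol{a}$ guaranteed by $\boldsymbol{a}\in\boldsymbol{H}_t$. First, I would fix such a $\boldsymbol{b}$, so $|\supp(\boldsymbol{b})|\ge n^{1-\eps/2}$ and $R_k^{\alpha}(\boldsymbol{b})\le t\cdot 2^{2k}|\boldsymbol{b}|^{2k}/p$. Writing the Rademacher sum indexed by $\boldsymbol{a}$ as the sum over $\boldsymbol{b}$ plus an independent sum over the remaining coordinates and conditioning on the latter yields the standard inequality $\rho_{\F_p}(\boldsymbol{a})\le \rho_{\F_p}(\boldsymbol{b})$, so it suffices to bound $\rho_{\F_p}(\boldsymbol{b})$.

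Next, I would set $M=c_1\, n^{1-5\eps/6}$ with $c_1$ a sufficiently small absolute constant, and take the implicit constant in $k=\Theta(n^{\eps/3})$ small enough as well. The key arithmetic identity $\eps/3+(1-5\eps/6)=1-\eps/2$ means that both Hal\'asz hypotheses $30M\le|\supp(\boldsymbol{b})|$ and $80kM\le|\boldsymbol{b}|$ reduce to inequalities of the form $O(n^{1-\eps/2})\le n^{1-\eps/2}$, which hold for large $n$. \cref{thm:halasz-fp} then gives
\[
\rho_{\F_p}(\boldsymbol{b})\;\le\; \frac{1}{p}\;+\;\frac{C'\, R_k(\boldsymbol{b})}{2^{2k}|\boldsymbol{b}|^{2k}M^{1/2}}\;+\;e^{-M}.
\]
Bounding $R_k(\boldsymbol{b})$ by \cref{lemma:R_k vs Rka} and then inserting the defining inequality for $\boldsymbol{H}_t$ splits the middle term as
\[
\frac{C'\, R_k(\boldsymbol{b})}{2^{2k}|\boldsymbol{b}|^{2k}M^{1/2}}\;\le\;\frac{C'\, t}{p\, M^{1/2}}\;+\;\frac{C'}{M^{1/2}}\left(\frac{10\, k^{1-\alpha}}{|\boldsymbol{b}|^{1-\alpha}}\right)^{k}.
\]
The first piece here equals $\Theta\!\bigl(t/(p\cdot n^{(1-5\eps/6)/2})\bigr)$, which is the advertised main term.

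The remaining task is to check that the three subordinate pieces---namely $1/p$, $e^{-M}$, and the \cref{lemma:R_k vs Rka} error---are each dominated by the main term. Since $t\ge n$, the main term beats $1/p$ trivially; and since $M=\Theta(n^{1-5\eps/6})$ while $\log p=\Theta(n^{\eps/3})\ll M$, we also get $e^{-M}\ll 1/p$. For the \cref{lemma:R_k vs Rka} error, substituting $k=\Theta(n^{\eps/3})$ and $|\boldsymbol{b}|\ge n^{1-\eps/2}$ shows that $10\, k^{1-\alpha}/|\boldsymbol{b}|^{1-\alpha}$ is a negative power of $n$, and raising to the $k$-th power yields super-polynomial decay in $n$, which swamps the factor of $p$ in any comparison with the main term. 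The main obstacle is the tight parameter balance: $M$ must be as large as possible to push $M^{-1/2}$ down to the target exponent $(1-5\eps/6)/2$, yet small enough that Hal\'asz's hypotheses hold (which caps $kM$ at roughly $|\supp(\boldsymbol{b})|\ge n^{1-\eps/2}$) and that the \cref{lemma:R_k vs Rka} error stays subordinate---this three-way tension is precisely what forces the exponent $1-5\eps/6$ appearing in the statement, and the hypothesis $\eps<1/100$ simply provides enough slack for all comparisons to go through.
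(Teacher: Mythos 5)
Your proposal is correct and follows essentially the same route as the paper: fix the witness subvector $\boldsymbol{b}$, choose $M=\Theta(n^{1-5\eps/6})$ (the paper takes $M=\lfloor n^{1-\eps/2}/(80k)\rfloor$, which is the same up to constants), apply Hal\'asz's inequality, split off the main term via \cref{lemma:R_k vs Rka} and the $\boldsymbol{H}_t$ membership, and verify the other three pieces are subordinate. Your explicit remark that $\rho_{\F_p}(\boldsymbol{a})\le\rho_{\F_p}(\boldsymbol{b})$ by conditioning on the coordinates outside $\boldsymbol{b}$ is a welcome clarification of a step the paper leaves implicit when it bounds only $\rho_{\F_p}(\boldsymbol{b})$.
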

\begin{proof}
  As $\boldsymbol{a} \in \boldsymbol{H}_t$, we can find a subvector $\boldsymbol{b}$ of $\boldsymbol{a}$ such that $|\supp(\boldsymbol{b})| \ge n^{1-\eps/2}$ and $\Rka(\boldsymbol{b}) \le t \cdot 2^{2k} \cdot |\boldsymbol{b}|^{2k}/p$. Set $M = \lfloor n^{1-\eps/2} / (80k) \rfloor = \Theta(n^{1-5\eps/6})$ so that
  \[
    \max\{30M, 80Mk\} = 80Mk \le n^{1-\eps/2} \le |\supp(\boldsymbol{b})| \le |\boldsymbol{b}|.
  \]
  Thus we may apply \cref{thm:halasz-fp} to obtain, for some absolute constant $C_0$, 
  $$\rho_{\F_{p}}(\boldsymbol{b})  \le \frac{1}{p}+\frac{C_0R_{k}(\boldsymbol{b})}{2^{2k} \cdot |\boldsymbol{b}|^{2k} \cdot M^{1/2}}+e^{-M}. $$
  Now, using \cref{lemma:R_k vs Rka} we can upper bound the right hand side by
  \begin{align*}
    \rho_{\F_{p}}(\boldsymbol{b}) & \leq \frac{1}{p}+\frac{C_0R_{k}^{\alpha}(\boldsymbol{b})+C_0\left(40k^{1-\alpha} |\boldsymbol{b}|^{1+\alpha}\right)^k}{2^{2k} \cdot |\boldsymbol{b}|^{2k} \cdot M^{1/2}}+e^{-M}\\
                                  & \leq \frac{1}{p}+\frac{C_0t \cdot 2^{2k} \cdot |\boldsymbol{b}|^{2k}/p+C_0\left(40k^{1-\alpha} |\boldsymbol{b}|^{1+\alpha}\right)^k}{2^{2k} \cdot |\boldsymbol{b}|^{2k} \cdot M^{1/2}}+e^{-M}\\
                                  & = \frac{1}{p} \left(1 + \frac{C_0t}{M^{1/2}} +C_0\left( 10 (k / |\boldsymbol{b}|)^{1-\alpha}\right)^k \cdot \frac{p}{M^{1/2}}\right) + e^{-M}.
  \end{align*}
Now we wish to show that, with the parameter assignments above, the dominant term in this sum is $\frac{C_0 t}{p M^{1/2}}$. To this end, we bound each of the other terms as follows. First, 

$$e^{-M} = e^{-\Theta(n^{1-5\eps/6})} = o(2^{-n^{\eps/3}}) = o\left(\frac{1}{p}\right).$$
(here we use the upper bound assumption on $\eps$.) Second,  

\begin{align*}C_0\left( 10 (k / |\boldsymbol{b}|)^{1-\alpha}\right)^k \cdot \frac{p}{M^{1/2}}&\leq C_0\left(10\left(n^{\eps/3-(1-\eps/2)}\right)^{1-\alpha}\right)^k \cdot p\\ 
&= \left(n^{-\Theta(1)}\right)^{\Theta(n^{\eps/3})} \cdot p\\
&= 2^{-\Theta(n^{\eps/3}\log n)}\cdot \Theta(2^{n^{\eps/3}})\\
&=o(1).
\end{align*}
And last, we observe that, as $t\geq n$, 
$$
\frac{C_0t}{M^{1/2}}\geq \frac{n}{\Theta(n^{\frac{1}{2}(1-5\eps/6}))} = \omega(1).
$$

Therefore the dominant term in the sum above is indeed $\frac{C_0t}{p M^{1/2}}$; then, choosing the constant $C=C(\alpha,\eps)>C_0$ sufficiently large, we obtain
$$
    \rho_{\F_{p}}(\boldsymbol{b}) \leq  \frac{C t}{p M^{1/2}}
    \leq \frac{C t}{p n^{\frac{1}{2}(1-5\eps/6)}}
$$
as desired. (Note: in the last step, we have incorporated the implicit constant in $M=\Theta(n^{1-5\eps/6})$ into the constant $C$.)
\end{proof}

\begin{lemma}
  \label{lemma:counting-bad}
  For every integer $n$ and real $t \ge n$,
  \[
    \left|\left\{\boldsymbol{a} \in \F_p^n : |\supp(\boldsymbol{a})| \ge n^{1 - \eps/2} \text{ and } \boldsymbol{a} \not\in \boldsymbol{H}_t\right\}\right| \le 2^n\left(\frac{p}{\alpha t}\right)^n \cdot t^{n^{1- \eps/2}}.
  \]
\end{lemma}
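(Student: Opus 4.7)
The plan is to reduce the problem to \cref{thm:counting-lemma} by partitioning the relevant vectors according to their supports. Let $s := \lceil n^{1 - \eps/2} \rceil$ and fix a vector $\boldsymbol{a} \in \F_p^n$ with $|\supp(\boldsymbol{a})| \ge s$ and $\boldsymbol{a} \notin \boldsymbol{H}_t$. Write $I := \supp(\boldsymbol{a})$, $m := |I|$, and $\boldsymbol{a}' := \boldsymbol{a}_I \in (\F_p \setminus \{0\})^m$, so that $\boldsymbol{a}$ is uniquely determined by the pair $(I, \boldsymbol{a}')$. The key observation bridging the two definitions is that every subvector $\boldsymbol{b}'$ of $\boldsymbol{a}'$ with $|\boldsymbol{b}'| \ge s$ has full support, and hence $|\supp(\boldsymbol{b}')| = |\boldsymbol{b}'| \ge s$; since $\boldsymbol{b}'$ is also a subvector of $\boldsymbol{a}$, the assumption $\boldsymbol{a} \notin \boldsymbol{H}_t$ forces $\Rka(\boldsymbol{b}') > t \cdot 2^{2k} |\boldsymbol{b}'|^{2k}/p$. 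In particular, $\boldsymbol{a}' \in \Bad_{k,s,\geq t}^{\alpha}(m)$.

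Applying \cref{thm:counting-lemma} for each possible support size then gives
\begin{align*}
\left|\left\{\boldsymbol{a} : |\supp(\boldsymbol{a})| \ge s,\ \boldsymbol{a} \notin \boldsymbol{H}_t\right\}\right|
&\le \sum_{m=s}^{n} \binom{n}{m}\left|\Bad_{k,s,\geq t}^{\alpha}(m)\right|\\
&\le \sum_{m=s}^{n} \binom{n}{m}\left(\frac{s}{m}\right)^{2k-1}(\alpha t)^{s-m}p^m.
\end{align*}
To finish, I would first observe that we may assume $t \le p$, for otherwise every vector with $|\supp(\boldsymbol{a})| \ge s$ satisfies $h(\boldsymbol{a}) \le p \le t$ (as noted in the text after \cref{def:Ht-rrm}) and the set on the left is empty. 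Then, using $\sum_{m=s}^n \binom{n}{m} \le 2^n$ and bounding the summand term by term, it remains to verify that for each $m \in [s,n]$,
$$\left(\frac{s}{m}\right)^{2k-1}(\alpha t)^{s-m} p^m \le \left(\frac{p}{\alpha t}\right)^n t^s,$$
which, after dividing the left side by the right, reduces to the inequality
$$\left(\frac{s}{m}\right)^{2k-1}\alpha^{n-m+s}\left(\frac{t}{p}\right)^{n-m} \le 1.$$
Each of the three factors lies in $(0,1]$ under our assumptions ($s \le m$, $\alpha < 1$, $t \le p$, and all exponents nonnegative), so the inequality holds trivially.

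The only conceptual step is the translation between the support-based hypothesis defining $\boldsymbol{H}_t$ and the coordinate-count hypothesis in the definition of $\Bad_{k,s,\geq t}^{\alpha}$; restricting to $\supp(\boldsymbol{a})$ resolves this mismatch, after which every subvector of the restriction automatically has full support. I do not anticipate a serious obstacle beyond keeping track of the exponents in the final comparison, and in particular the slight ``slack'' in the target bound (the factor $2^n$) is what absorbs the sum over all possible supports.
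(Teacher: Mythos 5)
Your proof is correct and follows the same approach as the paper's: reduce to the counting theorem (\cref{thm:counting-lemma}) by fixing a support, apply the theorem to the restriction of $\boldsymbol{a}$ to its support, bound the resulting expression term by term, and sum over all $\binom{n}{m}$ supports to pick up the factor $2^n$. Your bridge between the two definitions (noting that every subvector of $\boldsymbol{a}_I$ of length at least $s$ has full support, so the $\boldsymbol{H}_t$ condition translates exactly into the $\Bad^{\alpha}_{k,s,\geq t}$ condition) is spelled out more explicitly than in the paper, but it is the same reduction.
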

\begin{proof}
  We may assume that $t \le p$, as otherwise the left-hand side above is zero; see the comment below \cref{def:Ht-rrm}. Let us now fix an $S \subseteq [n]$ with $|S| \ge n^{1- \eps/2}$ and count only vectors $\boldsymbol{a}$ with $\supp(\boldsymbol{a}) = S$. Since $\boldsymbol{a} \not\in \boldsymbol{H}_t$, the restriction $\boldsymbol{a}_S$ of $\boldsymbol{a}$ to the set $S$ must be contained in the set $\boldsymbol{B}_{k,n^{1- \eps/2},\geq t}^\alpha(|S|)$. Hence, \cref{thm:counting-lemma} implies that the number of choices for $\boldsymbol{a}_S$ is at most
  \[
    \left(\frac{n^{1-\eps/2}}{|S|}\right)^{2k-1}(\alpha t)^{n^{1-\eps/2}-|S|}p^{|S|} \leq \left(\frac{p}{\alpha t}\right)^n t^{n^{1 - \eps/2}},
  \]
  where the second inequality follows as $n^{1-\eps/2}\leq |S|\leq n$ and $\alpha t \le t \le p$. Since $\boldsymbol{a}_S$ completely determines $\boldsymbol{a}$, we obtain the desired conclusion by summing the above bound over all sets $S$.
\end{proof}
	
\section{Proof of \cref{thm:resilience}} \label{sec:resilience}
In this section we gradually construct the entire proof of \cref{thm:resilience}.	
	
For convenience, we introduce some notation to indicate the distance of two Rademacher matrices.

\begin{definition}
	For two matrices $n\times m$ matrices $M, M'$ we let $d(M, M')$ denote the number of entries where $M$ and $M'$ differ.
\end{definition}

With this definition in hand, \cref{thm:resilience} can be stated as follows: 

\begin{theorem}
  For every $\varepsilon>0$ and $m\geq n+n^{1-\varepsilon/6}$, a.a.s. we have $rank(M')=n$ for all $n\times m$, $\pm 1$ matrices $M'$ with $d(M_{n,m},M')\leq (1-\varepsilon)m/2$. 
\end{theorem}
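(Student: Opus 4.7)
The plan is to pass to a finite field and union bound over adversarial change patterns. Fix a prime $p=\Theta(2^{n^{\eps/3}})$. Since $\rk_{\R}(M')<n$ forces an integer left-null vector with gcd one (hence nonzero mod $p$), it suffices to bound $\Pr[\rk_{\F_p}(M')<n]$. For any fixed set of at most $s:=(1-\eps)m/2$ entries to flip, $M_{n,m}$ with those entries flipped is again uniform on $\{\pm 1\}^{n\times m}$. A union bound over the $N\le\binom{nm}{s}=n^{(1-\eps)m/2+o(m)}$ such change patterns (plus the $\le 2^m=n^{o(m)}$ choices of which $n^{1-\eps/6}$ columns to discard when extracting a submatrix) reduces the problem to showing
$$
\Pr[\rk_{\F_p}(M_{n,m})<n]\ \le\ n^{-(1-5\eps/6)m/2+o(m)},
$$
which beats $(N\cdot 2^m)^{-1}$ by the margin $n^{-\eps m/12+o(m)}=o(1)$.

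To prove this probability bound, union bound over possible left-null vectors:
$$
\Pr[\rk_{\F_p}(M_{n,m})<n]\ \le\ \sum_{\boldsymbol{a}\ne\boldsymbol{0}}\Pr[\boldsymbol{a}^T X=0]^m\ \le\ \sum_{\boldsymbol{a}\ne\boldsymbol{0}}\rho_{\F_p}(\boldsymbol{a})^m,
$$
with $X$ a uniformly random column. Split the sum by support size. The main contribution comes from $\boldsymbol{a}$ with $|\supp(\boldsymbol{a})|\ge n^{1-\eps/2}$: partition these into the dyadic shells $\boldsymbol{H}_{2t}\setminus\boldsymbol{H}_t$ with $t=2^i$ ranging from $\Theta(n)$ up to $p$. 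Each $\boldsymbol{a}\in\boldsymbol{H}_{2t}$ satisfies $\rho_{\F_p}(\boldsymbol{a})\le 2Ct/(p\,n^{(1-5\eps/6)/2})$ by Lemma~\ref{lem:smallballdreg}, while Lemma~\ref{lemma:counting-bad} bounds the number of large-support $\boldsymbol{a}\not\in\boldsymbol{H}_t$ by $2^n(p/(\alpha t))^n\,t^{n^{1-\eps/2}}$. Multiplying and simplifying,
$$
|\boldsymbol{H}_{2t}\setminus\boldsymbol{H}_t|\cdot\rho_{\F_p}(\boldsymbol{a})^m\ \le\ 2^n(2C)^m\alpha^{-n}\,(t/p)^{m-n}\,t^{n^{1-\eps/2}}\,n^{-(1-5\eps/6)m/2}.
$$
The gap $m-n\ge n^{1-\eps/6}>0$ is crucial: combined with $t\le p=2^{n^{\eps/3}}$ one has $(t/p)^{m-n}\le 1$ and $t^{n^{1-\eps/2}}\le p^{n^{1-\eps/2}}=2^{n^{1-\eps/6}}$, both absorbed into the $o(m)$ error whenever $m\ge n$. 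Summing over the $O(\log p)=O(n^{\eps/3})$ dyadic levels preserves the bound $n^{-(1-5\eps/6)m/2+o(m)}$.

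The remaining ``small-support'' vectors with $|\supp(\boldsymbol{a})|<n^{1-\eps/2}$ are handled by a crude estimate: there are only $2^{O(n^{1-\eps/6})}$ such $\boldsymbol{a}\in\F_p^n$, and for each, $\rho_{\F_p}(\boldsymbol{a})$ is either trivially tiny (approximately $1/p$ for ``generic'' $\boldsymbol{a}$ whose signed partial sums do not vanish mod $p$) or controlled by a Littlewood--Offord bound after lifting to $\Z$-representatives in $[-p/2,p/2]$, so their total contribution is negligible compared to $N^{-1}$. The main technical obstacle is the arithmetic bookkeeping in the good-vector case: the three small scales $\eps/2,\eps/3,\eps/6$ (appearing respectively in the large-support threshold, the order of $\log p$ and $k$, and the column gap $m-n$) must interlock so that the Halász exponent $(1-5\eps/6)/2$ beats the change-pattern exponent $(1-\eps)/2$ by a positive $\Theta(\eps)$ margin, preserving the crucial constant $1/2$ in the final resilience bound.
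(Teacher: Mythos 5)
Your large-support argument is essentially the paper's: the same prime, the same $\boldsymbol{H}_t$ machinery, the same interplay of Lemmas~\ref{lem:smallballdreg} and~\ref{lemma:counting-bad}, with dyadic shells $\boldsymbol{H}_{2t}\setminus\boldsymbol{H}_t$ in place of the paper's unit shells $\boldsymbol{H}_t\setminus\boldsymbol{H}_{t-1}$. That part is fine. The gap is in your treatment of small-support vectors, and it is not a bookkeeping issue but a structural one.

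Your reduction asserts that a union bound over the $N\approx n^{(1-\eps)m/2}$ change patterns leaves you to prove
$\Pr[\rk_{\F_p}(M_{n,m})<n]\le n^{-(1-5\eps/6)m/2+o(m)}$. But this inequality is false: taking $\boldsymbol{a}=(1,1,0,\dots,0)$ gives $\Pr[\boldsymbol{a}^T X=0]=\tfrac12$ and hence $\Pr[\boldsymbol{a}^TM_{n,m}=\boldsymbol{0}]=2^{-m}$, so $\Pr[\rk_{\F_p}(M_{n,m})<n]\ge 2^{-m}$, which dwarfs $n^{-(1-5\eps/6)m/2}$ once $n$ is large. Equivalently, $N\cdot 2^{-m}=2^{(1-\eps+o(1))\frac m2\log n - m}\to\infty$, so the union bound over change patterns diverges for small-support $\boldsymbol{a}$. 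Your claimed fix --- that $\rho_{\F_p}(\boldsymbol{a})$ is ``approximately $1/p$'' or ``controlled by a Littlewood--Offord bound'' for such $\boldsymbol{a}$ --- cannot be made to work: for constant support size $\rho_{\F_p}(\boldsymbol{a})$ is $\Theta(1)$, and for support size $\ell$ the LO bound only gives $\rho\lesssim\ell^{-1/2}$, so $N\cdot\rho^m$ still blows up whenever $\ell$ is polynomially smaller than $n$. There is no choice of anticoncentration estimate that rescues a naive $N\times\rho^m$ union bound here, because $N$ already exceeds $2^m$ while $\rho^m$ cannot beat $2^{-m}$.

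The paper sidesteps this (Lemma~\ref{lem:getting rid of small dependencies}) by \emph{not} union-bounding over change patterns for small-support vectors. Instead it uses the deterministic observation that flipping at most $(1-\eps)m/2$ entries of $M$ alters at most $(1-\eps)m/2$ coordinates of $\boldsymbol{a}^T M$; hence if some edited $M'$ satisfies $\boldsymbol{a}^TM'=\boldsymbol{0}$, then $\boldsymbol{a}^T M_{n,m'}$ must already have had at least $(1+\eps-o(1))m/2$ zero entries. For a fixed nonzero $\boldsymbol{a}$ this is a single Chernoff-type event of probability at most $2^{-c(\eps)m}$, \emph{independent of the change pattern}, and the union bound is then over the at most $2^{O(n^{1-\eps/6})}$ small-support vectors only, giving $2^{-\Theta(n)}$. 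This is the idea your proposal is missing. (Separately, the paper handles $m=\omega(n)$ by a much simpler $\F_3$ Chernoff argument before embarking on the $\F_p$ machinery under the assumption $m=O(n)$; your sketch treats all $m$ uniformly, which would at minimum require rechecking the asymptotics, but that is a minor point compared to the small-support gap.)
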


First, we will prove \cref{thm:resilience} under the assumption that $m=\omega(n)$. 

\subsection{Proof of  \cref{thm:resilience} under the assumption $m=\omega(n)$}\label{subsec:bigm}

Let $\varepsilon>0$ be any fixed constant, and let $m\geq C(\eps)n$, where $C(\varepsilon)$ is a sufficiently large constant. We wish to show that a.a.s., $M=M_{n,m}$ is such that every $n\times m$ matrix $M'$ with $d(M,M')\leq (1-\varepsilon)m/2$ has rank $n$. 

In order to do so, let us take (say) $p=3$ and work over $\F_3$. Observe that if the above statement holds over $\F_3$ then it trivially holds over $\mathbb{Z}$. 

Let $\boldsymbol{a}\in \F_3^n\setminus \{\textbf{0}\}$, and note that for a randomly chosen $\boldsymbol{x}\in \{\pm 1\}^n$ we have
$$\Pr[\boldsymbol{a}^T \boldsymbol{x}=0\,]\leq \frac 12.$$ 
Therefore, as the columns of $M$ are independent, it follows that the random variable $X_{\boldsymbol{a}}=$ ``the number of zeroes in $\boldsymbol{a}^T M$'' is stochastically dominated by $\text{Bin}(m,\frac 12)$. Hence, by Chernoff's bound, we obtain that 

$$\Pr\left[X_{\boldsymbol{a}}\geq (1+\varepsilon)m/2\right]\leq e^{-C_1m}$$ 
for some $C_1$ that depends on $\varepsilon$. By applying the union bound over all $\boldsymbol{a}\in \F_3^n\setminus \{\boldsymbol{0}\}$ we obtain that 
$$\Pr\big[\exists \boldsymbol{a}\in \F_3^n\setminus \{\boldsymbol{0}\} \text{ with } X_{\boldsymbol{a}}\geq (1+\varepsilon)m/2\big]\leq 3^ne^{-C_1m}=o(1),$$ 
where the last inequality follows from the fact that $m\geq C(\eps)n$ and $C(\eps)$ is sufficiently large. 

Thus $M$ is typically such that in \emph{every} non-zero linear combination of its rows, there are less than $(1+\varepsilon)m/2$ many zeroes. In particular, since by changing at most $(1-\varepsilon)m/2$ many entries one can affect at most $(1-\varepsilon)m/2$ columns, it follows that for all $M'$ with $d(M,M')\leq (1-\varepsilon)m/2$, no non-trivial combination of the rows of $M'$ is the $0$ vector. In particular, every such $M'$ is of rank $n$. This completes the proof for this case.  \hfill \qed

\subsection{Proof of  \cref{thm:resilience} under the assumption $m=O(n)$}

In what follows we always assume that $m = O(n)$. Therefore, whenever convenient, in appropriate asymptotic formulas we may switch between $m$ and $n$ without further explanation. This case is more involved than the case $m=\omega(n)$ and it will be further divided into a few subcases. From now on, we fix $p$ to be some prime $p=\Theta(2^{n^{\eps/3}})$, and concretely, we write $m\leq C(\eps)n$ for some constant $C(\eps)$. 

Now, write $m'= m-n^{1-\eps/6}$ (the width of the matrix under consideration minus the $n^{1-\eps/6}$ ``extra'' columns). In the following two subsections, we will show that with high probability, for every $\boldsymbol{a}	\in \F_p^n$, if $\boldsymbol{a}^T M'=\boldsymbol{0}$ for some $n\times m'$ matrix $M'$ with $d(M', M_{n,m'})\leq (1-\eps)m/2$, then $\boldsymbol{a}$ has ``many'' nonzero entries, and is ``pseudorandom'' in some sense (\cref{lem:getting rid of small dependencies,larger dependencies}). From here, we can apply the Halasz inequality (in the form of 
\cref{lem:smallballdreg}) almost directly, using the fact that there are $m-m'\geq n^{1-\eps/6}$ extra columns, to conclude that for any such $\boldsymbol{a}$, the probability that $\boldsymbol{a}^T M_{n,m} = \boldsymbol{0}$ is small.

\subsubsection{Eliminating Small Linear Dependencies}

First, we wish to show that if $\boldsymbol{a}^TM'=\boldsymbol{0}$ (over $\F_p$) for some $M'$ with $d(M_{n,m'},M')\leq (1-\varepsilon)m/2$, then $\boldsymbol{a}$ has ``many'' non-zero entries (assuming $\boldsymbol{a}\neq \boldsymbol{0}$ of course). 

	\begin{lemma} \label{lem:getting rid of small dependencies}
		Let $\varepsilon>0$, let $p=\Theta(2^{n^{\eps/3}})$ be a prime, and let $n + n^{1-\eps/6} \leq m \leq C(\eps) n$. Write $m' = m-n^{1-\eps/6}$. Then, working in $\F_p$, the probability there exists a matrix $M'$ with $d(M',M_{n,m'})\leq (1-\varepsilon)m/2$ and a nonzero vector $\boldsymbol{a}\in \F_p^n$ with $|\text{supp}(\boldsymbol{a})|\leq n^{1-\varepsilon/2}$ and with $\boldsymbol{a}^TM'=\boldsymbol{0}$ is at most $2^{-\Theta(n)}$.
	\end{lemma}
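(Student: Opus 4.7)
The plan is to union-bound over all candidate sparse vectors $\boldsymbol{a}\in\F_p^n$ with $|\supp(\boldsymbol{a})|\leq n^{1-\eps/2}$, and for each such $\boldsymbol{a}$ separately bound, by a Chernoff argument, the probability that the adversary can realize $\boldsymbol{a}$ as a left-null vector of some nearby $M'$. The key decoupling observation is that if $\boldsymbol{a}$ has support $S$, then flipping an entry of a column \emph{outside} of rows $S$ does not change $\boldsymbol{a}^T(M')_j$, so the adversary's budget imposes a deterministic lower bound on a $M_{n,m'}$-measurable quantity.

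Fix $\boldsymbol{a}\neq\boldsymbol{0}$ with support $S$, $|S|\leq n^{1-\eps/2}$; then each $a_i$ ($i\in S$) is a nonzero element of $\F_p$. Let
$$Y_{\boldsymbol{a}}:=\#\Big\{j\in[m']:{\textstyle\sum_{i\in S}}a_i(M_{n,m'})_{ij}\equiv 0\pmod{p}\Big\}$$
count the columns of $M_{n,m'}$ that already satisfy the linear equation imposed by $\boldsymbol{a}$. Since turning a ``bad'' column into a ``good'' one costs at least one flip within rows $S$ of that column, the existence of an admissible $M'$ forces
$$m'-Y_{\boldsymbol{a}}\leq(1-\eps)m/2,\qquad\text{i.e.,}\qquad Y_{\boldsymbol{a}}\geq\tfrac{1+\eps}{2}m-n^{1-\eps/6}.$$
As $p$ is an odd prime and $a_i\neq 0$ in $\F_p$, conditioning on all but one Rademacher entry shows each column is ``good'' with probability at most $1/2$; by column independence $Y_{\boldsymbol{a}}$ is stochastically dominated by $\Bin(m',1/2)$. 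Since the required deviation from the mean $m'/2$ is $\eps m/2-n^{1-\eps/6}/2=\Theta_\eps(n)$ (using $m\geq n$), Chernoff yields
$$\P\!\Big[Y_{\boldsymbol{a}}\geq\tfrac{1+\eps}{2}m-n^{1-\eps/6}\Big]\leq e^{-c(\eps)n}$$
for some $c(\eps)>0$.

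To finish, note that the number of supports $S\subseteq[n]$ of size at most $n^{1-\eps/2}$ is at most $2^{O(n^{1-\eps/2}\log n)}$, and each such support carries at most $p^{n^{1-\eps/2}}=2^{O(n^{1-\eps/6})}$ choices for $\boldsymbol{a}$ (since $p=\Theta(2^{n^{\eps/3}})$). Both factors are $2^{o(n)}$, so the union bound gives total probability at most $2^{O(n^{1-\eps/6})}e^{-c(\eps)n}=2^{-\Theta(n)}$, as desired. The only genuine subtlety is the decoupling step described in the first paragraph; once one commits to fixing $\boldsymbol{a}$ first and treating the adversary's budget as a constraint on $Y_{\boldsymbol{a}}$, the rest is a clean Chernoff-versus-union-bound calculation, and the numerics work precisely because $p$ was chosen just small enough that candidate sparse $\F_p$-vectors remain $2^{o(n)}$ in number.
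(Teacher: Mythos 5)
Your proof is correct and follows essentially the same strategy as the paper's: fix a sparse $\boldsymbol{a}$, observe that the adversary's $(1-\eps)m/2$ edits force $\boldsymbol{a}^T M_{n,m'}$ to already have $\geq m'-(1-\eps)m/2$ zero entries, apply Chernoff to the $\Bin(m',1/2)$-dominated zero count, and union-bound over the $2^{o(n)}$ sparse vectors. Your decoupling remark (only flips within rows of $\supp(\boldsymbol{a})$ matter) is a slight sharpening of the paper's coarser observation that each edit alters at most one entry of $\boldsymbol{a}^T M_{n,m'}$, but it changes nothing quantitatively.
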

	
	\begin{proof}
		Given a vector $\boldsymbol{a}\in\F_p^n$, we let $\ell:=|\text{supp}(\boldsymbol{a})|$. Note that for any $\boldsymbol{a}\neq \boldsymbol{0}$ and a uniformly chosen vector $\boldsymbol{x}\in \{\pm 1\}^n$ we trivially have 
		$$\Pr[\boldsymbol{a}^T \boldsymbol{x}=0]\leq \frac 12.$$
		Moreover, as we are only allowed to change at most $(1-\varepsilon)m/2$ coordinates of $M_{n,m'}$, it follows that at most $(1-\varepsilon)m/2$ entries of $\boldsymbol{a}^TM_{n,m'}$ can be altered. In particular, if there exists a vector $\boldsymbol{a}$ for which $ \boldsymbol{a}^T M'=\boldsymbol{0}$, where $d(M_{n,m'},M')\leq (1-\varepsilon)m/2$, then this implies that $\boldsymbol{a}^TM_{n, m'}$ already contained at least $m'-(1-\eps)m/2 = (1+\varepsilon-o(1))m/2$ zero entries. 
		
		Now, since the random variable counting the number of $0$ entries is stochastically dominated by $\text{Bin}(n,\frac{1}{2})$, by Chernoff's bound we obtain that for a given $\boldsymbol{a}\neq \boldsymbol{0}$, the probability to have at least $(1+\varepsilon-o(1))m/2$ zeroes in $\boldsymbol{a}^TM_{n, m'}$ is at most $2^{-c(\varepsilon)m}$, where $c(\varepsilon)$ is some constant depending only on $\varepsilon$. Thus the probability that for a given nonzero vector $\boldsymbol{a}$, there exists some $M'$ with $d(M',M_{n,m'})\leq (1-\eps)m/2$ and  $\boldsymbol{a}^TM'=\boldsymbol{0}$ is at most $2^{-c(\varepsilon)m}$.
		
		All in all, by applying the union bound over all $\boldsymbol{a}\neq \boldsymbol{0}$ with $\ell\leq n^{1-\eps/2}$ nonzero entries, the probability that we are seeking to bound is at most
		$$\sum_{\ell =1}^{ n^{1-\varepsilon/2}}\binom{n}{\ell}p^\ell2^{-c(\varepsilon)m}\leq \sum_{\ell =1}^{ n^{1-\varepsilon/2}}2^{\ell\log n+\ell n^{\varepsilon/3}-c(\varepsilon)m}=2^{-\Theta(n)}$$
		where the last equality holds due to the assumption $\ell\leq n^{1-\varepsilon/2}$.
	\end{proof}

\subsubsection{Eliminating ``bad'' vectors}

We now show that, almost surely, any vector $\boldsymbol{a}$ with many non-zero entries and with $\boldsymbol{a}^TM'=\boldsymbol{0}$ for some $M'$ with $d(M',M_{n,m'})\leq (1-\eps)m/2$ will be ``good'' or ``unstructured''.

	\begin{lemma}
		\label{larger dependencies}
		Let $\eps>0$, let  $p=\Theta(2^{n^{\varepsilon/3}})$ be a prime
		, and let $n+n^{1-\eps/6}\leq m\leq C(\eps)n$. Write $m'= m-n^{1-\eps/6}$. Then, working in $\F_p$, the probability that there exists a matrix $M'$ with $d(M',M_{n,m'})\leq (1-\eps)m/2$ and a vector $\boldsymbol{a}\in \F_p^n\setminus \boldsymbol{H}_n$ with at least $n^{1-\eps/2}$ non-zero entries such that $\boldsymbol{a}^T M' =\boldsymbol{0}$ is at most $2^{-\Theta(n\log n)}$.
	\end{lemma}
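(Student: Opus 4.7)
The plan is to run a union bound combining \cref{lem:smallballdreg} (to control $\rho_{\F_p}(\boldsymbol{a})$ for each bad vector) with \cref{lemma:counting-bad} (to count the bad vectors), after reducing the event to a statement purely about the random matrix $M_{n,m'}$. As in \cref{lem:getting rid of small dependencies}, the existence of $M'$ with $d(M',M_{n,m'})\le (1-\eps)m/2$ and $\boldsymbol{a}^T M' = \boldsymbol{0}$ implies that $\boldsymbol{a}^T M_{n,m'}$ already vanishes on at least $K := m' - (1-\eps)m/2 = (1+\eps)m/2 - n^{1-\eps/6}$ of its $m'$ coordinates, since at most $(1-\eps)m/2$ columns of $M_{n,m'}$ can be altered. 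It therefore suffices to bound $\sum_{\boldsymbol{a}} \Pr[\boldsymbol{a}^T M_{n,m'}\text{ has }\geq K\text{ zero entries in }\F_p]$ by $2^{-\Theta(n\log n)}$, with the sum ranging over $\boldsymbol{a}\in \F_p^n\setminus \boldsymbol{H}_n$ with $|\supp(\boldsymbol{a})|\geq n^{1-\eps/2}$.

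For such an $\boldsymbol{a}$, the trivial inequality $\Rka(\boldsymbol{b})\leq 2^{2k}|\boldsymbol{b}|^{2k}$ gives $\boldsymbol{a}\in \boldsymbol{H}_{p}$, so $n < h(\boldsymbol{a}) \leq p$, and \cref{lem:smallballdreg} applied with $t=h(\boldsymbol{a})$ (which satisfies the hypothesis $t\ge n$) yields $\rho_{\F_p}(\boldsymbol{a}) \leq Ch(\boldsymbol{a})/(p\, n^{(1-5\eps/6)/2})$. Since the columns of $M_{n,m'}$ are independent, the number of zero entries of $\boldsymbol{a}^T M_{n,m'}$ is stochastically dominated by a $\mathrm{Bin}(m',\rho_{\F_p}(\boldsymbol{a}))$ random variable, so a direct Markov/binomial estimate gives
\[
\Pr\!\left[\boldsymbol{a}^T M_{n,m'}\text{ has }\geq K\text{ zeros}\right] \leq \binom{m'}{K}\rho_{\F_p}(\boldsymbol{a})^K \leq \left(\frac{2eC\,h(\boldsymbol{a})}{p\, n^{(1-5\eps/6)/2}}\right)^K.
\]

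To execute the union bound I would partition the bad $\boldsymbol{a}$'s dyadically by $h(\boldsymbol{a})\in (t_i, 2t_i]$ with $t_i:=2^i n$, $i=0,1,\dots$ up to $2^i n\le p$. At level $i$, \cref{lemma:counting-bad} applied with $t=t_i$ upper-bounds the count of such vectors by $2^n(p/(\alpha t_i))^n\, t_i^{n^{1-\eps/2}}$, while the per-vector probability at this level is at most $(4eCt_i/(p\, n^{(1-5\eps/6)/2}))^K$. Multiplying count by probability collapses several factors and leaves, up to lower-order terms,
\[
2^{O(n)} \cdot (p/t_i)^{n-K}\cdot t_i^{n^{1-\eps/2}}\cdot n^{-K(1-5\eps/6)/2}.
\]
With $\log_2 p=\Theta(n^{\eps/3})$, $K=(1+\eps)m/2-n^{1-\eps/6}$, and $n-K\geq (1-\eps)n/2 - o(n) > 0$, the exponent above is linear in $\log t_i$, and I would verify that it is at most $-\Theta(n\log n)$ uniformly in $i$; summing over the $O(n^{\eps/3})$ dyadic levels then gives the desired bound.

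The main obstacle is precisely this last verification: the counting bound is weakest at the smallest dyadic level (large $(p/t_i)^{n-K}$, since $n-K>0$), while the Hal\'asz atom-probability bound is weakest at the largest dyadic level (where $h(\boldsymbol{a})$ is close to $p$). The delicate cancellation between $(p/t_i)^{n-K}$ and $t_i^{n^{1-\eps/2}}$ along the dyadic range, together with the $n^{-K(1-5\eps/6)/2}$ factor that supplies the crucial $\log n$ per column, is the crux of the argument and leans on the hypotheses $\eps<1/100$, $p=\Theta(2^{n^{\eps/3}})$, and $m\ge n+n^{1-\eps/6}$ to combine favorably.
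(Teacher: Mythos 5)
Your reduction to ``count the zeros already present in $\boldsymbol{a}^T M_{n,m'}$'' is the right move for \cref{lem:getting rid of small dependencies}, but it fails here, and the failure is exactly at the point you flag at the end. With $K = m' - (1-\eps)m/2 = (1+\eps)m/2 - n^{1-\eps/6}$ and $m$ as small as $n + n^{1-\eps/6}$, we have $K \approx (1+\eps)n/2 < n$ (recall $\eps<1/100$). Thus $n-K \approx (1-\eps)n/2 > 0$, and at the smallest dyadic level $t_i = n$ the term
\[
\left(\frac{p}{t_i}\right)^{n-K} \approx \left(\frac{2^{n^{\eps/3}}}{n}\right)^{(1-\eps)n/2} = 2^{\Theta(n^{1+\eps/3})}
\]
is far larger than the favorable factor $n^{-K(1-5\eps/6)/2} = 2^{-\Theta(n\log n)}$ can possibly compensate for, since $n^{\eps/3} \gg \log n$. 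Your final bound at this level is therefore $2^{+\Theta(n^{1+\eps/3})}$, not $2^{-\Theta(n\log n)}$. The ``delicate cancellation'' you hope for does not occur: the $(p/t)^n$ factor coming from \cref{lemma:counting-bad} can only be cancelled by an exposure exponent of at least $n$ on $\rho_{\F_p}(\boldsymbol{a}) = \Theta(t/(p\, n^{(1-5\eps/6)/2}))$, and $K<n$ falls short.

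The paper's proof avoids this by \emph{not} reducing to pre-existing zeros. Instead it takes a union bound over all $\sum_{i\le (1-\eps)m/2}\binom{nm'}{i} = 2^{(1-\eps+o(1))\frac{m}{2}\log n}$ choices of edit set; conditioning on the edit locations, sign flips preserve the i.i.d.~Rademacher distribution, so each edited $M'$ is distributed exactly as $M_{n,m'}$, giving the per-vector probability $\rho_{\F_p}(\boldsymbol{a})^{m'}$ with the \emph{full} exponent $m' \ge n$. Then $(p/t)^n \cdot (t/p)^{m'} = (t/p)^{m'-n} \le 1$ and the $t$-dependence is harmless, while the extra $n^{(1-\eps+o(1))m/2}$ from the edit-set union bound is exactly what the $n^{-(1-5\eps/6)m'/2}$ factor beats, leaving a margin of $n^{-(\eps/12+o(1))m}$. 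In short: the exponent $m'\ge n$ on $\rho$ is essential, and your $\binom{m'}{K}\rho^K$ loses it.

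Your dyadic partition of $(\boldsymbol{H}_t)$ instead of the paper's unit partition $\boldsymbol{H}_t\setminus\boldsymbol{H}_{t-1}$ is a harmless cosmetic difference; the defect is only in the passage from ``exists $M'$'' to ``$\ge K$ zeros in $M_{n,m'}$'' rather than the paper's ``union over edit sets, re-randomize.''
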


	\begin{proof} 
		Our first step is to take a union bound over choices of $\boldsymbol{a}$; we wish to bound the quantity
		\begin{equation}\label{londerDependencyBound}
	 \sum_{\substack{\boldsymbol{a}\in \F_p^n\setminus \boldsymbol{H}_n \\ |\supp(\boldsymbol{a})|\geq\, n^{1-\eps/2}}} \textstyle\Pr[\exists M' \text{ with } d(M', M_{n,m'})\leq (1-\eps)m/2 \text{ and } \boldsymbol{a}^T M' =\boldsymbol{0}].
		\end{equation}
		Now we use the sets $\boldsymbol{H}_t$ to divide the vectors $\boldsymbol{a}$ into different classes. As observed after \cref{def:Ht-rrm}, every $\boldsymbol{a}\in\F_p^n$ with at least $n^{1-\eps/2}$ nonzero entries is in $\boldsymbol{H}_t$ for some $t\leq p$. Moreover, notice that $\boldsymbol{H}_t\subseteq \boldsymbol{H}_{t+1}$ for any $t > 0$. So we can write $\F_p^n\setminus \boldsymbol{H}_n$ as a union $\bigcup_{n+1\leq t\leq p} \boldsymbol{H}_t \setminus \boldsymbol{H}_{t-1}$. Therefore, taking a union bound over integers $t>n$, the probability \cref{londerDependencyBound} that we are trying to bound is at most
		\begin{equation*}
		\sum_{t= n+1}^{p}\left( \sum_{\boldsymbol{a}\in \boldsymbol{H}_t\setminus \boldsymbol{H}_{t-1}} \textstyle\Pr[\exists M' \text{ with } d(M', M_{n,m'})\leq (1-\eps)m/2 \text{ and } \boldsymbol{a}^T M' = \boldsymbol{0}]\right).
		\end{equation*}
	
		Now, we take another union bound, this time over the possible edits to the matrix; by changing at most $(1-\varepsilon)m/2$ entries, an adversary can form
		\begin{equation*}\label{eq:UnionBdEdits}
		\sum_{i=0}^{(1-\eps)m/2}\binom{nm'}{i}\leq \left(\frac{2 e n}{1 - \eps}\right)^{(1-\varepsilon)m/2}= 2^{(1-\varepsilon+o(1))\frac{m}{ 2 }\log n}
		\end{equation*} many $n\times m'$ matrices. Thus \cref{londerDependencyBound} is at most
		\[
		\sum_{t= n+1}^{p} \left(\sum_{\boldsymbol{a}\in \boldsymbol{H}_t\setminus \boldsymbol{H}_{t-1}} \textstyle2^{(1-\varepsilon+o(1))\frac{m}{2 }\log n}\cdot \Pr[\boldsymbol{a}^TM_{n,m'} = \boldsymbol{0}]\right).
		\]
		(Note: this is possible because, by conditioning on the locations of the entries edited, each altered matrix $M'$ is distributed identically to $M_{n,m'}$.)

		We now wish to bound the probability that $\boldsymbol{a}^T M_{n,m'}  = \boldsymbol{0}$ for any fixed $\boldsymbol{a}\in \boldsymbol{H}_t \setminus \boldsymbol{H}_{t-1}$. By \cref{lem:smallballdreg} (as $\boldsymbol{a}\in \boldsymbol{H}_t$), and by the independence of the columns in $M_{n,m'}$, this probability is at most $\left(\frac{Ct}{p n^{\frac{1}{2}(1-5\eps/6)}}\right)^{m'}$. Therefore, \cref{londerDependencyBound} is at most
		\[
		\sum_{t=n+1}^{p} \left(\sum_{\boldsymbol{a}\in \boldsymbol{H}_t \setminus \boldsymbol{H}_{t-1}} 2^{(1-\varepsilon+o(1))\frac{m}{2}\log n}\cdot \left(\frac{Ct}{p n^{\frac{1}{2}(1-5\eps/6)}}\right)^{m'}\right).
		\]
		We now bound the number of vectors $\boldsymbol{a}$ in each $\boldsymbol{H}_t \setminus \boldsymbol{H}_{t-1}$. By definition, $\boldsymbol{H}_t \setminus \boldsymbol{H}_{t-1} \subset \F_p^n\setminus \boldsymbol{H}_{t-1}$, and by \cref{lemma:counting-bad}, the size of $\F_p^n\setminus \boldsymbol{H}_{t-1}$ is bounded above by
		\[\left(\frac{2p}{\alpha t}\right)^n \cdot t^{n^{1- \eps/2}},\]
		where $\alpha\in(0,1)$ is any fixed constant (note that the constant $C$ above depends on $\alpha$). Thus \cref{londerDependencyBound} is bounded by the following explicit expression:
		\begin{align*}
		&\sum_{t=n+1}^{p} \left(\frac{2p}{\alpha t}\right)^n \cdot t^{n^{1- \eps/2}} \cdot  2^{(1-\varepsilon+o(1))\frac{m}{2}\log n}\cdot \left(\frac{Ct}{p n^{\frac{1}{2}(1-5\eps/6)}}\right)^{m'}\\
		=\ & 2^{(1-\varepsilon+o(1))\frac{m}{2}\log n}\cdot n^{-(1-5\eps/6) \frac{m'}{2}} \cdot \left(\frac{2}{\alpha} \right)^{n}\cdot C^{m'}\sum_{t= n+1}^{p} \left(\frac{t}{p}\right)^{m'-n}t^{n^{1-\eps/2}}.
		\end{align*}
		Now, bounding each piece separately, and recalling that $m'\geq n$,
		\[
		\left(\frac{2}{\alpha}\right)^n C^{m'}= 2^{O(n)},\]
		\[\sum_{t= n+1}^{p} \left(\frac{t}{p}\right)^{m'-n}t^{n^{1-\eps/2}} \leq p\cdot1\cdot p^{n^{1-\eps/2}} = 2^{n^{\eps/3}}\cdot 2^{n^{\eps/3}\cdot n^{1-\eps/2}} = 2^{o(n)},
		\]
		\[
		2^{(1-\varepsilon+o(1))\frac{m}{2}\log n}\cdot n^{-(1-5\eps/6) \frac{m'}{2}} = 2^{-(1-o(1))\frac{\eps}{12}{\cdot m\log{n}}},
		\]
		where in the last equality, we use the fact that $m'= m-n^{1-\eps/6}= (1-o(1))m$. Thus in total, \cref{londerDependencyBound} is at most
		\[
		2^{(-\eps/12+o(1))m\log{n}} = 2^{-\Theta(n\log n)}.
		\]
		This completes the proof of the lemma.
	\end{proof}
	
	\subsubsection{Completing the proof}
	
   Given the assumption $m \leq C(\eps)n$, we will in fact prove something slightly stronger, namely that Theorem \ref{thm:resilience} holds over $\F_p$ for an appropriate choice of $p$. We wish to bound the probability that there exists some nonzero vector $\boldsymbol{a}\in\F_p^n$ with $\boldsymbol{a}^T M_{n,m} = \boldsymbol{0}$, even after at most $(1-\eps)m/2$ edits. Let $p=\Theta(2^{n^{\eps/3}})$ be prime. We begin by dividing into ``structured'' and ``unstructured'' vectors; for brevity, given a nonzero vector $\boldsymbol{a}$ and matrix $M$, we denote by $\mathcal{E}(\boldsymbol{a},M)$ the event that there exists a matrix $M'$ with $d(M',M)\leq (1-\eps)m/2$ and $\boldsymbol{a}^TM'=\boldsymbol{0}$.
  \begin{align}
		\textstyle\Pr[\exists \,\boldsymbol{a}\in \F_p^n \text{ with } &\mathcal{E}(\boldsymbol{a},M_{n,m})]\notag
		\\
		\textstyle\leq& 
		\Pr[\exists \,\boldsymbol{a}\in\boldsymbol{H}_n \text{ with } \mathcal{E}(\boldsymbol{a},M_{n,m})]\label{eq:FirstTerm}\\
		\textstyle
		+&\Pr[\exists \,\boldsymbol{a}\in\F_p^n\setminus\boldsymbol{H}_n \text{ with } \mathcal{E}(\boldsymbol{a},M_{n,m})],\label{eq:SecondTerm}
   \end{align}
		where $\boldsymbol{H}_{n}$ is the set of `good' or `unstructured' vectors defined in \cref{sec:good-bad-vectors}. The first summand (\ref{eq:FirstTerm}) is bounded as follows: first, take a union bound over possible edits to $M_{n,m}$. There are
		\[
		\sum_{i=0}^{(1-\eps)m/2}\binom{n m}{i} = 2^{(1-\eps+o(1))\frac{m}{2}\log n}
		\]
		possible choices for $M'$. Thus, for the first term (\ref{eq:FirstTerm}), we obtain a bound of
		\[
		2^{(1-\eps+o(1))\frac{m}{2}\log n}\textstyle\cdot \Pr[\exists\, \boldsymbol{a}\in \boldsymbol{H}_n \text{ with } \boldsymbol{a}^TM_{n,m} = \boldsymbol{0}].
		\]
		(As in the proof of \cref{larger dependencies}, this is possible because, by conditioning on the locations of the entries edited, each $M'$ is distributed identically to $M_{n,m}$.) And for $\boldsymbol{a}\in \boldsymbol{H}_{n}$, and $\boldsymbol{x}\in\{\pm 1\}^n$ chosen uniformly at random, \cref{lem:smallballdreg} gives
		\[
		\Pr\left[\boldsymbol{a}^T\boldsymbol{x}=0\right] \leq \frac{C n}{p n^{(1/2- 5\eps/12)}} < \frac{n}{p}.
		\]
		So for $M_{n,m}$ with $m\geq n+n^{1-\eps/6}$ columns, the probability of having $\boldsymbol{a}^TM_{n,m} = \boldsymbol{0}$ is at most $\left(\frac{n}{p}\right)^{n+n^{1-\eps/6}}$. Therefore, as there are at most $p^n$ vectors $\boldsymbol{a}\in \boldsymbol{H}_n$, and as $m\leq C(\eps)\cdot n$, the first summand (\ref{eq:FirstTerm}) is bounded by
		\begin{align*}
		2^{(1-\eps+o(1))\frac{m}{2}\log n} \cdot p^n \left(\frac{n}{p}\right)^{n+n^{1-\eps/6}}
		& =     2^{(1-\eps+o(1))\frac{m}{2}\log n} \cdot p^{-n^{1-\eps/6}} n^{n+n^{1-\eps/6}}\\
		& = 2^{O(n\log n)}\cdot 2^{-n^{\eps/3} n^{1-\eps/6}}\\
		& = 2^{-\Theta(n^{1+\eps/6})}.
		\end{align*}
		
		\noindent Now we bound the second summand (\ref{eq:SecondTerm}). We begin by restricting to the first $m' = m-n^{1-\eps/6}$ columns of $M_{n,m}$. This gives a strictly larger probability, as it is more likely that there is a linear dependency among the rows of a matrix when we restrict to only a subset of its columns. So (\ref{eq:SecondTerm}) is bounded above by
		\begin{align*}
		\textstyle\Pr[\exists \,\boldsymbol{a}\in\F_p^n&\setminus\boldsymbol{H}_n \text{ with } \mathcal{E}(\boldsymbol{a},M_{n,m'})]\\
		\leq &\Pr[\exists \,\boldsymbol{a}\in\F_p^n\setminus\boldsymbol{H}_n \text{ with } |\supp(a)|\geq n^{1-\eps/2} \text{ and } \mathcal{E}(\boldsymbol{a},M_{n,m'})]\\
		+ &\Pr[\exists \,\boldsymbol{a}\in\F_p^n\setminus\boldsymbol{H}_n \text{ with } |\supp(a)|< n^{1-\eps/2} \text{ and } \mathcal{E}(\boldsymbol{a},M_{n,m'})]
		\end{align*}
		And these are respectively the precise probabilities bounded in Lemmas \ref{larger dependencies} and \ref{lem:getting rid of small dependencies}. Therefore this is at most
		\[
		2^{-\Theta(n\log n)} + 2^{-\Theta(n)}.
		\]
		Thus in total, the probability that there exists a nonzero vector $\boldsymbol{a}\in\F_p^n$ with $\boldsymbol{a}^T M_{n,m} = \boldsymbol{0}$, even after at most $(1-\eps)m/2$ edits is at most
		\begin{align*}
 2^{-\Theta(n^{1+\eps/6})}    + 2^{-\Theta(n\log n)} + 2^{-\Theta(n)} \
		=\ 2^{-\Theta(n)}.
		\end{align*}

\bigskip 

{\bf Acknowledgment.} The authors would like to thank Wojciech Samotij for many fruitful discussions.

\bibliographystyle{abbrv}
\bibliography{resilience}
	
\appendix

\section{Proof of Hal\'asz's inequality over $\F_p$}
\label{app:halasz}
In this appendix, we prove \cref{thm:halasz-fp}. The proof follows Hal\'asz's original proof in~\cite{halasz1977estimates}. 
\begin{proof}[Proof of \cref{thm:halasz-fp}]
  Let $e_p$ be the canonical generator of the Pontryagin dual of $\F_p$, that is, the function $e_p \colon \F_p \to \C$ defined by $e_p(x) = \exp(2\pi i x / p)$. Recall the following discrete Fourier identity in $\F_p$:
  \[
    \delta_0(x) = \frac{1}{p} \sum_{r \in \F_p}e_p(rx),
  \]
  where $\delta_0(0) = 1$ and $\delta_0(x) = 0$ if $x \neq 0$. Let $\epsilon_1,\dotsc,\epsilon_n$ be i.i.d.\ Rademacher random variables. Note that for any $q \in \F_p$, 
  \begin{align*}
    \Pr\left(\sum_{j=1}^n \epsilon_j a_j = q\right) &= \E\left[\delta_0 \left(\sum_{j=1}^n \epsilon_j a_j - q\right)\right] \\
                                                    &= \E\left[ \frac{1}{p} \sum_{r\in \F_p} e_p\left(r \left(\sum_{j=1}^n \epsilon_j a_j - q\right)\right)\right] \\
                                                    & = \E\left[\frac{1}{p} \sum_{r \in \F_p} \prod_{j=1}^n e_p( \epsilon_j r a_j) e_p(-r q)\right] \\
                                                    & = \frac{1}{p} \sum_{r \in \F_p} e_p(-rq) \prod_{j=1}^n \E\big[e_p(\epsilon_j r a_j)\big].
  \end{align*}
  Since each $\epsilon_j$ is a Rademacher random variable, we have
  \[
    \E\big[e_p(\epsilon_jra_j)\big] = \exp(2\pi i ra_j/p)/2 + \exp(-2\pi i ra_j/p)/2 = \cos(2\pi r a_j/p).
  \]
  It thus follows from the triangle inequality that
  \begin{equation}
    \label{eq:Pr-cos-UB}
    \Pr\left(\sum_{j=1}^n \epsilon_j a_j = q\right) \le \frac{1}{p} \sum_{r\in \F_p} \prod_{j=1}^n \left|\cos(2 \pi r a_j/p)\right| = \frac{1}{p} \sum_{r \in \F_p} \prod_{j=1}^n \left|\cos\left(\pi r a_j/p\right) \right|,
  \end{equation}
  where the equality holds because the map $\F_p \ni r \mapsto 2r \in \F_p$ is a bijection (as $p$ is odd) and (since $x \mapsto |\cos(\pi x)|$ has period $1$ and it is therefore well defined for $x \in \R/\Z$) because $|\cos(2\pi x/p)| = |\cos(\pi (2x)/p)|$ for every $x \in \F_p$.

  Given a real number $y$, denote by $\|y\| \in [0,1/2]$ the distance between $y$ and a nearest integer. Let us record the useful inequality 
\[
  |\cos(\pi y)| \leq \exp\big(-\| y \|^2/2\big),
\]
which is valid for every real number $y$. Using this inequality to bound from above each of the $n$ terms in the right-hand side of~\cref{eq:Pr-cos-UB}, we arrive at
\begin{equation}
  \label{eqn:halasz-prelim}
  \max_{q\in \F_p}\Pr\left(\sum_{i=1}^n \epsilon_i a_i = q\right) \leq \frac{1}{p} \sum_{r\in \F_p} \exp\left(-\frac{1}{2} \sum_{i=1}^n\|r a_i/p  \|^2\right).
\end{equation}
Now, for each nonnegative real $t$, we define the following `level' set:
\[
  T_t := \left\{r \in \F_p : \sum_{j=1}^{n} \|r a_j/p  \|^2 \leq t \right\}.
\]
Since for every real $y$, we may write $e^{-y} = \int_0^\infty \mathds{1}[y \le t] e^{-t}\,dt$, then
\begin{equation}
  \label{eqn:halasz-integral}
  \sum_{r\in \F_p} \exp\left(-\frac{1}{2} \sum_{j=1}^{n} \|r a_j/p  \|^2\right) = \sum_{r \in \F_p} \int_0^\infty \mathds{1}\left[\sum_{j=1}^n \|r a_j / p \|^2 \le 2t\right] e^{-t} \, dt = \int_0^{\infty} |T_{2t}| e^{-t} \, dt.
\end{equation}
Since for every nonzero $a \in \F_p$, the map $\F_p \ni r \mapsto ra \in \F_p$ is bijective, we have
\begin{align*}
  \sum_{r \in \F_p} \sum_{j=1}^n \|r a_j/p \|^2 & = \sum_{j \in \supp(\boldsymbol{a})} \sum_{r \in \F_p} \|r a_j/p\|^2 = |\supp(\boldsymbol{a})| \sum_{r \in \F_p} \|r/p\|^2 \\
                                                & = |\supp(\boldsymbol{a})| \cdot 2\sum_{s=1}^{(p-1)/2} (s/p)^2 = |\supp(\boldsymbol{a})| \cdot \frac{p^2-1}{12p} > \frac{|\supp(\boldsymbol{a})| \cdot p}{15},
\end{align*}
where the inequality holds because $p \ge 3$ (as $p$ is an odd prime). On the other hand, it follows from the definition of $T_t$ that for every $t \ge 0$,
\[
  \sum_{r \in \F_p} \sum_{j=1}^n \|r a_j/p \|^2 \le |T_t| \cdot t + \big(p - |T_t|\big) \cdot n.
\]
This implies that $|T_t| < p$ as long as $t \le |\supp(\boldsymbol{a})|/15$.

Recall that the Cauchy--Davenport theorem states that every pair of nonempty $A, B \subseteq \F_p$ satisfies $|A+B| \ge \min\{ p, |A|+|B|-1\}$. It follows that for every positive integer $m$ and every $t \ge 0$, the iterated sumset $mT_t$ satifies $|mT_t| \ge \min\{ p, m|T_t|-m\}$. We claim that for every $m$, the iterated sumset $mT_t$ is contained in the set $T_{m^2 t}$ and thus
\[
    |T_{m^2t}| \ge \min\big\{p, m|T_t|-m\big\}.
\]
Indeed, for $r_1,\dots, r_m \in T_t$, it follows from the triangle inequality and the Cauchy--Schwarz inequality that 
\begin{align*}
  \sum_{j=1}^n \left\| \sum_{i=1}^{m} r_i a_j/ p \right\|^2 \leq \sum_{j=1}^{n} \left(\sum_{i=1}^{m} \left\|r_i a_j/p \right\|\right)^2  \leq \sum_{j=1}^{n} m \sum_{i=1}^{m} \left \|r_i a_j/p\right \|^2  \leq m^2 t.
\end{align*}
Since $|T_{m^2t}| < p$ as long as $m^2t \le |\supp(\boldsymbol{a})|/15$, we see that if $t \le 2M \le |\supp(\boldsymbol{a})|/15$, then, letting $m = \lfloor \sqrt{2M/t} \rfloor \ge 1$, we obtain
\begin{equation}
  \label{eqn:halasz-cd}
  |T_t| \le \frac{|T_{m^2t}|}{m}+1 \le \frac{\sqrt{2t} \cdot |T_{2M}|}{\sqrt{M}} + 1.
\end{equation}

We now bound the size of $T_{2M}$. First, it follows from the elementary inequality
\[
  \cos(2\pi y) \ge 1-2\pi^2\|y\|^2 \ge 1 - 20\|y\|^2,
\]
which holds for all $y\in \mathbb{R}$, that $T_{2M} \subseteq T'$, where 
\[
  T' := \left\{r \in \F_p : \sum_{j=1}^n \cos(2 \pi r a_j/p) \geq n - 40M\right\}.
\]
Second, by Markov's inequality,
\[
  |T'| \le \frac{1}{\big(n-40M\big)^{2k}} \cdot \sum_{r \in T_M'}\left(\sum_{j=1}^n\cos(2\pi ra_j/p)\right)^{2k}.
\]
Third, by our assumption that $80Mk \le n$ and since the sequence $\big(1-1/(2k)\big)^{2k}$ is increasing,
\[
  (n-40M)^{2k} = \left(1 - \frac{40M}{n}\right)^{2k} \cdot n^{2k} \ge \left(1 - \frac{1}{2k}\right)^{2k} \cdot n^{2k} \ge \frac{n^{2k}}{\sqrt{2}}
\]
Fourth, since $T' \subseteq \F_p$ and $2\cos(2\pi r a_j / p) = e_p(ra_j) + e_p(-ra_j)$, we also have
\begin{align*}
  \sum_{r \in T'} \left( \sum_{j=1}^n \cos(2 \pi r a_j / p)\right)^{2k} &\le \sum_{r \in \F_p} \left( \sum_{j=1}^n \big(e_p(ra_j) + e_p(-ra_j)\big)/2\right)^{2k}\\
                                                                          &= \frac{1}{2^{2k}} \sum_{(\sigma_1,\dotsc, \sigma_{2k})\in\{\pm 1\}^{2k}} \sum_{j_1, \dots, j_{2 k}} \sum_{r \in F_p} e_p\left( r \sum_{\ell=1}^{2 k} \sigma_\ell a_{j_\ell}\right) \\
                                                                          &= \frac{1}{2^{2k}}\sum_{(\sigma_1,\dots, \sigma_{2k})\in\{\pm 1\}^{2k}} \sum_{j_1,\dots, j_{2k}} p \cdot \delta_0\left(\sum_{\ell=1}^{2k} \sigma_\ell a_{j_\ell}\right)\\
                                                                          &= \frac{p R_k(\boldsymbol{a})}{2^{2k}}.
\end{align*}
Thus, we may conclude that
\begin{equation}
  \label{eqn:halasz-moment}
  |T_M| \le |T'| \le \frac{\sqrt{2} p R_k(\boldsymbol{a})}{2^{2k}n^{2k}}
\end{equation}

Finally, combining this with \cref{eqn:halasz-prelim,eqn:halasz-integral,eqn:halasz-cd,eqn:halasz-moment}, we get, 
\begin{align*}
  \max_{q\in\F_{p}}\Pr\left(\sum_{j=1}^n\epsilon_ja_j = q\right) & \leq \frac{1}{p}\int_{0}^{M} |T_{2t}| e^{-t}\,dt+\frac{1}{p}\int_{M}^\infty pe^{-t} \, dt\\
 & \le \frac{1}{p}\int_0^{M} \left(\frac{\sqrt{2t} \cdot |T_M|}{\sqrt{M}}+1\right)e^{-t}\,dt+e^{-M} \\
 & \le \frac{|T_{2M}|}{p\sqrt{M}} \cdot \int_0^{M} \sqrt{t}e^{-t} \, dt+ \frac{1}{p} \int_0^{M} e^{-t}\, dt +e^{-M}\\
 & \leq \frac{|T_{2M}|}{p\sqrt{M}} \cdot C' + \frac{1}{p}+e^{-M}\\
                                                                 & \le \frac{CR_{k}(\boldsymbol{a})}{2^{2k}n^{2k}\sqrt{M}} + \frac{1}{p}+e^{-M},
\end{align*}
as desired.
\end{proof}

\section{Proof of the counting theorem}
\label{sec:pf-counting-thm}

In this section, we prove~\cref{thm:counting-lemma} using an elementary double counting argument.

\begin{proof}[Proof of~\cref{thm:counting-lemma}]
  Let $\cZ$ be the set of all triples
  \[
    \left(I, \left(i_{s+1},\dots,i_{n}\right), \left(F_{j},{\boldsymbol{\epsilon}}^{j}\right)_{j=s+1}^{n} \right),
  \]
  where
  \begin{enumerate}[{label=(\roman*)}]
  \item $I \subseteq [n]$ and $|I|=s$, 
  \item $(i_{s+1},\dotsc,i_n) \in [n]^{n-s}$ is a permutation of $[n]\setminus I$,
  \item each $F_{j}:=(\ell_{j,1},\dotsc,\ell_{j,2k})$ is a sequence of $2k$ elements of $[n]$, and
  \item $\boldsymbol{\epsilon}^j\in \{\pm 1\}^{2k}$ for each $j$,
  \end{enumerate}
  that satisfy the following conditions for each $j$:
  \begin{enumerate}[{label=(\alph*)}]
  \item
    \label{item:Z-condition-1}
    $\ell_{j,2k}= i_{j}$ and
  \item
    \label{item:Z-condition-2}
    $(\ell_{j,1},\dotsc,\ell_{j,2k-1}) \in \big(I\cup \{i_{s+1},\dotsc,i_{j-1}\}\big)^{2k-1}$.
  \end{enumerate}

  \begin{claim}
    The number of triples in $\cZ$ is at most $(s/n)^{2k-1} \cdot \big(2^{n-s} n! / s!\big)^{2k}$.
  \end{claim}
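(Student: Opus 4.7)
My plan is to prove the claim by direct enumeration: I will count the exact number of triples in $\cZ$ by multiplying, step by step, the number of choices available at each stage of building a triple, then check that the resulting product equals the claimed upper bound.

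First, I would count the choices of $I \subseteq [n]$ with $|I| = s$; there are $\binom{n}{s}$ of them. Next, given $I$, the sequence $(i_{s+1},\dotsc,i_n)$ is an arbitrary ordering of $[n]\setminus I$, contributing a factor of $(n-s)!$. So far this accounts for $\binom{n}{s}(n-s)! = n!/s!$ initial choices. Then, for each $j \in \{s+1,\dotsc,n\}$, I will count the number of admissible pairs $(F_j,\boldsymbol{\epsilon}^j)$: condition~\ref{item:Z-condition-1} forces $\ell_{j,2k}=i_j$ (so there is a unique choice for this coordinate), condition~\ref{item:Z-condition-2} allows each of $\ell_{j,1},\dotsc,\ell_{j,2k-1}$ to range over the $(j-1)$-element set $I \cup \{i_{s+1},\dotsc,i_{j-1}\}$ (so $(j-1)^{2k-1}$ choices in total, since $I$ and $\{i_{s+1},\dotsc,i_{j-1}\}$ are disjoint of sizes $s$ and $j-1-s$), and $\boldsymbol{\epsilon}^j \in \{\pm 1\}^{2k}$ contributes $2^{2k}$. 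Multiplying these together over $j = s+1,\dotsc,n$ gives
\[
|\cZ| = \frac{n!}{s!} \cdot 2^{2k(n-s)} \cdot \prod_{j=s+1}^{n} (j-1)^{2k-1} = \frac{n!}{s!} \cdot 2^{2k(n-s)} \cdot \left(\frac{(n-1)!}{(s-1)!}\right)^{2k-1}.
\]

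Finally, I would simplify using the identity $\dfrac{(n-1)!}{(s-1)!} = \dfrac{s}{n}\cdot \dfrac{n!}{s!}$, which transforms the displayed product into
\[
\left(\frac{s}{n}\right)^{2k-1} \cdot \left(\frac{n!}{s!}\right)^{2k} \cdot 2^{2k(n-s)} = \left(\frac{s}{n}\right)^{2k-1} \cdot \left(\frac{2^{n-s} n!}{s!}\right)^{2k},
\]
matching the claimed bound exactly. So the inequality in the claim in fact holds with equality.

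There is no real obstacle here; the argument is a bookkeeping exercise in which the only nontrivial algebraic step is recognizing the identity that converts $\bigl((n-1)!/(s-1)!\bigr)^{2k-1}$ into the desired form $(s/n)^{2k-1}(n!/s!)^{2k-1}$. The subtle point to be careful about is that $I \cup \{i_{s+1},\dotsc,i_{j-1}\}$ has exactly $j-1$ elements (and not fewer), which uses that $(i_{s+1},\dotsc,i_n)$ is a permutation of $[n]\setminus I$ and hence disjoint from $I$; this is what gives the clean closed form $(j-1)^{2k-1}$ for the number of choices in condition~\ref{item:Z-condition-2}.
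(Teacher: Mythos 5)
Your proof is correct and takes essentially the same approach as the paper's: choose $I$, then the ordering of $[n]\setminus I$, then for each $j$ the $2^{2k}$ sign patterns and the $(j-1)^{2k-1}$ choices for the first $2k-1$ coordinates of $F_j$, and simplify. The only cosmetic difference is that you separate the $(n-s)!$ permutation factor up front rather than interleaving it into the product over $j$, and you observe (correctly) that the bound is in fact an equality.
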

  \begin{proof}
    One can construct any such triple as follows. First, choose an $s$-element subset of $[n]$ to serve as $I$. Second, considering all $j \in \{s+1, \dotsc, n\}$ one by one in increasing order, choose: one of  the $n-j+1$ remaining elements of $[n] \setminus I$ to serve as $i_j$; one of the $2^{2k}$ possible sign patterns to serve as $\boldsymbol{\epsilon}^{j}$; and one of the $(j-1)^{2k-1}$ sequences of $2k-1$ elements of $I \cup \{i_{s+1},\dots,i_{j-1}\}$ to serve as $(\ell_{j,1},\dotsc,\ell_{j,2k-1})$. Therefore,
    \begin{align*}
      |\cZ| & \le \binom{n}{s} \cdot \prod_{j=s+1}^n \left((n-j+1) \cdot 2^{2k} \cdot (j-1)^{2k-1}\right) \\
            & = \frac{n!}{s!(n-s)!} \cdot (n-s)! \cdot 2^{2k(n-s)} \cdot \left(\frac{(n-1)!}{(s-1)!}\right)^{2k-1} = \left(\frac{s}{n}\right)^{2k-1} \cdot \left(2^{n-s} \cdot \frac{n!}{s!}\right)^{2k}.\qedhere
    \end{align*}
  \end{proof}
  
  We call $\boldsymbol{a} = (a_1, \dotsc, a_n) \in \F_p^n$ \emph{compatible} with a triple from $\cZ$ if for every $j \in \{s+1, \dotsc, n\}$,
  \begin{equation}
    \label{eq:compatibility}
    \sum_{i=1}^{2k}\boldsymbol\epsilon^{j}_ia_{\ell_{j,i}}=0.
  \end{equation}
  
  \begin{claim}
    Each triple from $\cZ$ is compatible with at most $p^s$ sequences $\boldsymbol{a} \in \F_p^n$.
  \end{claim}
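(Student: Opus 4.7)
The plan is to exploit the carefully engineered ``triangular'' structure baked into the definition of $\cZ$: conditions \ref{item:Z-condition-1} and \ref{item:Z-condition-2} force each compatibility equation to involve exactly one new variable together with variables that have already been fixed, so each equation becomes a linear equation in one unknown that can be solved uniquely over $\F_p$.

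Concretely, I would proceed as follows. First, freely assign values in $\F_p$ to the coordinates $a_i$ for $i \in I$; this accounts for $p^s$ choices. Then process the indices $j = s+1, s+2, \dotsc, n$ in increasing order, maintaining the invariant that $a_i$ has already been determined for every $i \in I \cup \{i_{s+1}, \dotsc, i_{j-1}\}$. At step $j$, the compatibility equation \cref{eq:compatibility} reads
\[
    \epsilon^j_{2k}\,a_{i_j} + \sum_{i=1}^{2k-1}\epsilon^j_i\,a_{\ell_{j,i}} = 0,
\]
where I have used condition \ref{item:Z-condition-1} to isolate the last term as $\epsilon^j_{2k}a_{i_j}$. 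By condition \ref{item:Z-condition-2}, each of the indices $\ell_{j,1},\dotsc,\ell_{j,2k-1}$ lies in $I \cup \{i_{s+1},\dotsc,i_{j-1}\}$, so every $a_{\ell_{j,i}}$ appearing in the sum has already been assigned a value. Crucially, because $(i_{s+1},\dotsc,i_n)$ is a permutation of $[n]\setminus I$ and we reach $i_j$ only at step $j$, the index $i_j$ does not occur among $\ell_{j,1},\dotsc,\ell_{j,2k-1}$; hence $a_{i_j}$ appears only through the leading term $\epsilon^j_{2k}a_{i_j}$. Since $\epsilon^j_{2k}\in\{\pm 1\}$ is invertible in $\F_p$ (as $p$ is an odd prime), the equation determines $a_{i_j}$ uniquely from the previously assigned values.

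Iterating, every coordinate of $\boldsymbol{a}$ outside $I$ is forced by the equations, so the number of compatible $\boldsymbol{a}\in\F_p^n$ is at most the $p^s$ free choices made on $I$, giving the claim. There is no real obstacle here beyond verifying that condition \ref{item:Z-condition-2} genuinely excludes $i_j$ from the set of allowed prior indices; this is immediate from the fact that $(i_{s+1},\dotsc,i_n)$ is a permutation of $[n]\setminus I$ and the restriction only references entries up to $i_{j-1}$. Combined with the preceding claim bounding $|\cZ|$, a double-counting of $\cZ$ against pairs $(\text{triple},\boldsymbol{a})$ for $\boldsymbol{a}\in\Bad^{\alpha}_{k,s,\geq t}(n)$ — where each such $\boldsymbol{a}$ must be compatible with many triples by the definition of $R_k^\alpha$ and the hypothesis $R_k^\alpha(\boldsymbol{b})\geq t\cdot 2^{2k}|\boldsymbol{b}|^{2k}/p$ on all large subvectors — will yield the stated bound on $|\Bad^{\alpha}_{k,s,\geq t}(n)|$.
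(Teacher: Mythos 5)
Your proof is correct and follows essentially the same route as the paper's: use condition \ref{item:Z-condition-1} to isolate $\boldsymbol{\epsilon}^j_{2k}a_{i_j}$, use condition \ref{item:Z-condition-2} to see that the remaining terms are already determined, and conclude that each of the $p^s$ assignments to $(a_i)_{i\in I}$ extends uniquely. The only cosmetic difference is that the paper leaves the invertibility of $\boldsymbol{\epsilon}^j_{2k}$ and the exclusion of $i_j$ from the prior indices implicit, while you spell them out.
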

  \begin{proof}
    Using~\ref{item:Z-condition-1}, we may rewrite~\cref{eq:compatibility} as
    \[
      \boldsymbol{\epsilon}^{j}_{2k}a_{i_{j}} = -\sum_{i=1}^{2k-1}\boldsymbol\epsilon^{j}_ia_{\ell_{j,i}}.
    \]
    It follows from~\ref{item:Z-condition-2} that once a triple from $\cZ$ is fixed, the right-hand side above depends only on those coordinates of the vector $\boldsymbol{a}$ that are indexed by $i \in I \cup \{i_{s+1}, \dotsc, i_{j-1}\}$. In particular, for each of the $p^s$ possible values of $(a_i)_{i \in I}$, there is exactly one way to extend it to a sequence $\boldsymbol{a} \in \F_p^n$ that satisfies~\cref{eq:compatibility} for every $j$.
  \end{proof}

  \begin{claim}
    Each sequence $\boldsymbol{a} \in\Bad_{k,s, \ge t}^\alpha$ is compatible with at least
    \[
     \left(\frac{2^{n-s}n!}{s!}\right)^{2k} \cdot \left(\frac{\alpha t}{p}\right)^{n-s}
    \]
    triples from $\cZ$.
  \end{claim}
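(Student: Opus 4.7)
The plan is to lower-bound the number of compatible triples by a greedy construction. Fix $\boldsymbol{a} \in \Bad_{k,s,\ge t}^\alpha$ and build each compatible triple by first choosing $I \subseteq [n]$ of size $s$ and then, for $j=s+1,\ldots,n$ in order, choosing $(i_j, F_j, \boldsymbol{\epsilon}^j)$. Write $J_{j-1} := I \cup \{i_{s+1},\ldots,i_{j-1}\}$ and $J_j := J_{j-1}\cup\{i_j\}$. The structural conditions in the definition of $\cZ$ force $\ell_{j,2k}=i_j\notin J_{j-1}$ and $\ell_{j,r}\in J_{j-1}$ for $r<2k$, so the index $i_j$ appears only at position $2k$ of $(\ell_{j,1},\ldots,\ell_{j,2k})$ and is automatically a \emph{singleton} of this length-$2k$ sequence. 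I would then further restrict to those tuples whose index sequence has at least $(1+\alpha)k$ distinct entries; this only decreases the count but ensures every admissible tuple at step $j$ lies in $\Rka(\boldsymbol{a}_{J_j})$. Since $|J_j|=j\ge s+1$, the hypothesis $\boldsymbol{a}\in\Bad_{k,s,\ge t}^\alpha$ supplies $\Rka(\boldsymbol{a}_{J_j}) \ge t\cdot 2^{2k}j^{2k}/p$.

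The next step is a singleton-counting argument: if a tuple has $d\ge(1+\alpha)k$ distinct index values with total multiplicity $2k$, and $\sigma$ of them are singletons (multiplicity exactly one), then $2k \ge \sigma + 2(d-\sigma)$, giving $\sigma \ge 2(d-k) \ge 2\alpha k$. By symmetry under permutations of the $2k$ positions (a bijection on $\Rka$), an $\alpha$-fraction of tuples in $\Rka(\boldsymbol{a}_{J_j})$ have their singleton at position $2k$. Identifying the singleton value at position $2k$ with the new index $i_j$ and summing over $i_j\in[n]\setminus J_{j-1}$, I would obtain a per-step lower bound, uniform in the past choices, of roughly
\[
\frac{\alpha t}{p}\cdot (n-j+1)\cdot j^{2k-1}\cdot 2^{2k}
\]
on the number of admissible $(i_j, F_j, \boldsymbol{\epsilon}^j)$. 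Multiplying these bounds over $j=s+1,\ldots,n$ and summing over the $\binom{n}{s}$ choices of $I$ produces $\bigl(2^{n-s}n!/s!\bigr)^{2k}\cdot(\alpha t/p)^{n-s}$ compatible triples after the same factorial bookkeeping used in the upper bound on $|\cZ|$.

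The main obstacle is this per-step estimate. The position-symmetry argument produces tuples whose singleton value at position $2k$ can lie anywhere in $J_j$, whereas compatibility demands both that this value be the new index $i_j\in[n]\setminus J_{j-1}$ (hence outside $J_{j-1}$) \emph{and} that all other positions lie in $J_{j-1}$ (not merely that position $2k$ carry a singleton). One must therefore show that contributions from tuples whose singleton at position $2k$ lies in $J_{j-1}$, or whose number of outside-$J_{j-1}$ positions exceeds one, are absorbed. The natural approach is to sum the Bad inequality $\Rka(\boldsymbol{a}_{J_{j-1}\cup\{i\}})\ge t\cdot 2^{2k}j^{2k}/p$ across $i\in[n]\setminus J_{j-1}$, subtract off the baseline $\Rka(\boldsymbol{a}_{J_{j-1}})$ contribution (trivially bounded above by $2^{2k}(j-1)^{2k}$), and combine this averaging with the singleton bound to extract the clean factor of $\alpha$. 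Carrying out this bookkeeping with the right constants---without losing factors depending on $k$---is the technically delicate part of the claim.
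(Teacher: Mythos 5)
Your ingredients are the right ones: the Bad hypothesis applied at each scale $|J_j|=j>s$, the arithmetic showing that a $2k$-tuple with $\ge(1+\alpha)k$ distinct entries has $\ge 2\alpha k$ singletons, and the permutation-symmetry step extracting an $\alpha$-fraction with a singleton at position $2k$. The gap is exactly the one you flag as ``the main obstacle,'' and it is not a detail you can safely defer. Building the triple in increasing order $j=s+1,\dots,n$ forces you to fix $i_j$ \emph{before} selecting $F_j$; the hypothesis $\Rka(\boldsymbol{a}_{J_j})\ge t\cdot 2^{2k}j^{2k}/p$ counts all admissible tuples over $J_j$ with a singleton at position $2k$, but compatibility requires that singleton to equal the pre-chosen $i_j$ and all other positions to lie in $J_{j-1}$. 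The averaging/subtraction scheme you sketch is not carried out, and it is not clear it can be made to work without losing a $j$- or $k$-dependent factor; your per-step bound $\frac{\alpha t}{p}(n-j+1)j^{2k-1}2^{2k}$ is reverse-engineered to match the product, not derived.

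The paper sidesteps the obstacle by constructing the triple in \emph{decreasing} order $j=n,n-1,\dots,s+1$. At step $j$ the remaining index set $[n]\setminus\{i_n,\dots,i_{j+1}\}$ has size exactly $j$, so the Bad hypothesis supplies $\ge t\cdot 2^{2k}j^{2k}/p$ solutions with $\ge(1+\alpha)k$ distinct entries over that set; by the permutation symmetry at least an $\alpha$-fraction have $\ell_{2k}$ non-repeated. One then simply \emph{defines} $i_j:=\ell_{2k}$ and $F_j:=(\ell_1,\dots,\ell_{2k})$, and the remaining $\ell$'s automatically land in $[n]\setminus\{i_n,\dots,i_j\}=I\cup\{i_{s+1},\dots,i_{j-1}\}$; the set $I$ is read off at the end as $[n]\setminus\{i_n,\dots,i_{s+1}\}$. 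Because $i_j$ is determined by the chosen tuple rather than fixed in advance, there is nothing to subtract, and the per-step count $\alpha t\cdot 2^{2k}j^{2k}/p$ multiplies directly to $\bigl(2^{n-s}n!/s!\bigr)^{2k}\bigl(\alpha t/p\bigr)^{n-s}$. Reversing the order of construction is the missing idea.
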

  \begin{proof}
    Given any such $\boldsymbol{a}$, we may construct a compatible triple from $\cZ$ as follows. Considering all $j \in \{n, \dotsc, s+1\}$ one by one in decreasing order, we do the following. First, we find an arbitrary solution to
    \begin{equation}
      \label{eq:a-ell-solution}
      \pm a_{\ell_1} \pm a_{\ell_2} \pm \dotsb \pm a_{\ell_{2k}} = 0
    \end{equation}
    such that $\ell_1, \dotsc, \ell_{2k} \in [n]\setminus \{i_{n},\dots,i_{j+1}\}$ and such that $\ell_{2k}$ is a non-repeated index (i.e., such that $\ell_{2k} \neq \ell_i$ for all $i \in [2k-1]$). Given any such solution, we let $\ell_{2k}$ serve as $i_j$, we let the sequence $(\ell_1, \dotsc, \ell_{2k})$ serve as $F_j$, and we let $\boldsymbol{\epsilon}^j$ be the corresponding sequence of signs (so that~\cref{eq:compatibility} holds). The assumption that $\boldsymbol{a} \in \Bad_{k,s,\geq t}^\alpha(n)$ guarantees that there are at least  $t \cdot \frac{2^{2k}\cdot (n-j+1)^{2k}}{p}$ many solutions to~\cref{eq:a-ell-solution}, each of which has at least $2\alpha k$ nonrepeated indices. Since the set of all such solutions is closed under every permutation of the $\ell_i$s (and the respective signs), $\ell_{2k}$ is a non-repeated index in at least an $\alpha$-proportion of them. Finally, we let $I = [n] \setminus \{i_n, \dotsc, i_{s+1}\}$. Since different sequences of solutions lead to different triples, it follows that the number $Z$ of compatible triples satisfies
    \[
      Z \ge \prod_{j = s+1}^{n} \left(\alpha t \cdot \frac{2^{2k} \cdot (n-j+1)^{2k}}{p}\right) = \left(\frac{2^{n-s}n!}{s!}\right)^{2k}\cdot \left(\frac{\alpha t}{p}\right)^{n-s}.\qedhere
    \]
  \end{proof}

  Counting the number $P$ of pairs of $\boldsymbol{a} \in \Bad_{k, s, \ge t}^\alpha(n)$ and a compatible triple from $\cZ$, we have
  \[
    |\Bad_{k, s, \ge t}^\alpha(n)| \cdot \left(\frac{2^{n-s}n!}{s!}\right)^{2k} \cdot \left(\frac{\alpha t}{p}\right)^{n-s}\le P \le |\cZ| \cdot p^s \le \left(\frac{s}{n}\right)^{2k-1} \cdot \left(\frac{2^{n-s}n!}{s!}\right)^{2k} \cdot p^s,
  \]
  which yields the desired upper bound on $|\Bad_{k, s, \ge t}^\alpha(n)|$.
\end{proof}	
	
\end{document}